\newtheorem{theorem}{Theorem}[section]
\newtheorem{lemma}{Lemma}[section]
\newtheorem{proposition}{Proposition}[section]
\newtheorem{example}{Example}[section]
\newtheorem{remark}{Remark}[section]
\def\Z{\mathbb Z}
\def\bP{\mathbb P}
\def\L{\mathcal L}
\def\H{\mathcal H}
\def\M{\mathcal M}
\def\N{\mathcal N}
\def\l{\lambda}
\def\s{\sigma}
\def\ss{\bar \sigma}
\def\a{\alpha}
\def\b{\beta}
\def\p{\mathfrak p}
\def\P{\mathcal W}
\def\e{\varepsilon}
\def\iso{\equiv}
\def\o{\oplus}
\def\g{\gamma}
\def\u{\mathfrak u}
\def\iso{{\, \cong\, }}
\def\<{\langle}
\def\>{\rangle}
\def\Aut{\mbox {Aut}}
\def\bAut{\overline {\mbox {Aut}}}
\def\G{\overline G}
\def\ff{\phi}
\def\f{\psi}
\def\X{\mathcal X}
\def\Y{\mathcal Y}
\def\fH{\mathfrak H}
\def\t{\mu}
\def\d{\delta}
\def\m{\mu}
\def\v{\mathfrak v}
\def\r{\mu}
\title{Some special families of hyperelliptic curves}
\author{Tanush Shaska}
\address{Department of Mathematics, \\
University of Idaho \\
E-mail: tshaska@uidaho.edu }
\date{This paper has been published by Journal of Algebra and Applications on January 2004}
\keywords{Hyperelliptic curves, automorphism groups}
\subjclass{2000 Mathematics Subject Classification: 14Q05, 14Q15, 14R20, 14D22}
\begin{document}

\begin{abstract}
Let $\L_g^G$ denote the locus of hyperelliptic curves of genus $g$ whose automorphism group contains a subgroup
isomorphic to $G$.  We study spaces $\L_g^G$ for $G \iso \Z_n, \Z_2\o \Z_n, \Z_2\o A_4$, or $SL_2(3)$. We show
that for $G \iso \Z_n, \Z_2\o \Z_n$, the space $\L_g^G$ is a rational variety and find generators of its function
field.  For $G\iso \Z_2\o A_4, SL_2(3)$ we find  a necessary condition in terms of the coefficients, whether or
not the  curve  belongs to $\L_g^G$.  Further,  we  describe algebraically the loci of such curves for $g\leq 12$
and show that  for all curves in these loci  the field of moduli is a field of definition.
\end{abstract}

\maketitle

\section{Introduction}
%&&&&&&&&&&&&&&&&&&&&&&&&&&&&&&&&&&&&&&&&&&&&&&&&&&&&&&&&&&&&&&&&

One of the most interesting     problems     in     algebraic geometry is    to    obtain a generalization of the
theory of elliptic modular functions to the case of higher genus. In the elliptic case this is done by the
so-called $j$-{\it invariant} of elliptic curves. In the case of genus $g=2$,   Igusa (1960) gives a complete
solution via {\it absolute invariants} $i_1, i_2, i_3$ of genus 2 curves; see \cite{Ig}. Generalizing such results
to higher genus is much more difficult due to the existence of non-hyperelliptic curves. However, even restricted
to the hyperelliptic moduli $\fH_g$, the problem is still unsolved for $g \geq 3$. In other words, there is no
known way of identifying isomorphism classes of hyperelliptic curves of genus $g\geq 3$. In terms of classical
invariant theory  this  means  that the  field of invariants of binary forms of degree $2g+2$ is not known for
$g\geq 3$.

In previous work we  have focused on the loci $\L_g^G$  of hyperelliptic curves with $G$  embedded in the
automorphism group.  In \cite{GS}  we introduced  a way (via dihedral invariants) of identifying isomorphism
classes of genus $g$ hyperelliptic curves with non-hyperelliptic involutions.     In this paper we study cases
when the automorphism group is isomorphic to one of the following: $\Z_n, \Z_2\o \Z_n, \Z_2\o A_4$, and $SL_2(3)$.
This is part of a larger project of the author of finding an algorithm which determines the automorphism group of
hyperelliptic curves via their invariants and determining whether the field of moduli is the  same as the  field
of definition; see  \cite{Sh5}.

The second  section covers basic facts  on automorphism groups of hyperelliptic curves, Hurwitz spaces, and
invariants of binary forms. Let  $\X_g$ denote a  genus  $g$  hyperelliptic  curve defined  over an algebraically
closed field $k$  of characteristic zero, $\Aut(\X_g)$ its automorphism  group, and $z$ the hyperelliptic
involution of $\X_g$.  The group $\bAut(\X_g):=\Aut(\X_g)/\<z\>$  is called the {\it reduced automorphism group}
of $\X_g$.   In this paper we study hyperelliptic curves with reduced automorphism group isomorphic to a cyclic
group $\Z_n$ or $A_4$. We determine the ramification signature $\s$ of the cover  $\f : \X_g \to \bP^1$ with
monodromy group $G:=\Aut(\X_g) $ (cf. section 2.2 for details). Hurwitz spaces are moduli spaces of such covers
$\f$ which we denote by $\H_\s$.  There is  a map $$\Phi_\s: \H_\s \to \M_g$$  where $\M_g$ is the moduli space of
genus $g$ algebraic curves.   We denote by $\L_g^G(\s)$ the image $\Phi_\s (\H_\s)$ in the hyperelliptic locus
$\fH_g$.  Given a curve $\X_g$ we would like to determine if it belongs to the locus $\L_g^G(\s)$ and  describe
points $\p \in \L_g^G(\s)$. Hence, in section 2.3 we introduce invariants of binary forms.

In section three,  we study  the case when $\bAut(\X_g)$ is a cyclic group $\Z_n$.  There are three possible
signatures and two types of groups that occur as full automorphism groups, namely $\Z_{2n}$ and $\Z_2 \o \Z_n$. We
show that in all three cases $\L_g^G$ is a rational $\d$-dimensional variety and  $k(\L_g^G)=k(u_1, \dots , u_\d)$
with $\u:=(u_1, \dots , u_\d)$ defined in terms of the coefficients of the curve.  There is a 1-1 correspondence
between nonsingular points of $\L_g^G$ and  tuples  $\u=(u_1, \dots , u_\d)$.

In section four, we focus on the case when $\bAut (\X_g) \iso A_4$. There are six possible signatures  of the
cover  $\f : \X_g \to \bP^1$.  Three of these ramifications have $\Z_2 \o A_4$ as monodromy group and the other
three have  $SL_2(3)$.  We find  equations of these curves in each case.  We prove that if $\bAut (\X_g) \iso
A_4$, then $I_4 ( \X_g )=0$ (cf. section 3).  This gives a nice necessary condition of checking whether $\X_g$ has
automorphism group $\Z_2 \o A_4$ or $SL_2(3)$.

In the last section we focus on zero or 1-dimensional subvarieties $\L_g^G$ of $\fH_g$ for $G\iso \Z_2 \o A_4,
SL_2(3)$. In each case we find explicit  equations of such varieties in terms of $GL_2(k)$-invariants of binary
forms of degree $2g+2$. This gives an efficient algebraic way of determining if the automorphism group of a  genus
$g \leq 12$ is $\Z_2 \o A_4$ or $ SL_2(3)$. Such a method can be easily generalized to higher genus.  Further, we
show that for each $\X_g\in \L_g^G$, $g\leq 12$,  the field of moduli is the same as the field of definition.

\medskip

\noindent  {\bf  Notation:}  Throughout  this  paper  $k$  denotes  an algebraically  closed field  of
characteristic zero,  $g$ an  integer $\geq 2$, and $\X_g$ a  hyperelliptic curve of genus $g$ defined over $k$.
$\fH_g$ is the moduli space  of  hyperelliptic curves defined over  $k$.

%******************************************
\section{Preliminaries}
%*****************************************

In this section we recall some basic facts about hyperelliptic curves and their automorphisms, Hurwitz spaces, and
invariants of binary forms.
%******************************************************************
\subsection{Hyperelliptic curves and their automorphisms}
%******************************************************************

Let $k$  be an algebraically  closed field of characteristic  zero and $\X_g$  be a  genus  $g$  hyperelliptic
curve  given  by  the equation $Y^2=F(X)$, where $\deg(F)=2g+2$. Denote  the function field of $\X_g$ by
$K:=k(X,Y)$.  Then, $k(X)$ is the  unique degree 2 genus zero subfield of  $K$. $K$ is  a quadratic extension
field of $k(X)$ ramified exactly at $d=2g+2$ places $\a_1, \dots , \a_d$  of $k(X)$. The corresponding places  of
$K$ are called the {\it Weierstrass points} of $K$.   Let $\P:=\{ \a_1, \dots , \a_d \}$ and $G=Aut(K/k)$. Since
$k(X)$  is the only  genus 0 subfield  of degree  2  of $K$, then  $G$  fixes $k(X)$.  Thus, $G_0:=Gal(K/k(X))=\<
z_0 \>$, with $z_0^2=1$, is central in $G$. We call  the {\it reduced automorphism group}  of $K$ the  group
$\G:=G/G_0$. We illustrate with the following  diagram:
$$
\xymatrix{ K \ar@{-}[d]^{\, \, \, \Z_2} \ar@/_1.8pc/[dd]_{\, \, G} \\
k(X) \ar@{-}[d]^{\, \, \, \G}  \\ k  \\ }  \qquad \qquad \qquad \xymatrix{ \X_g \ar[d]_{\, \, \, }^{\, \, \, \Z_2}
\ar@/_1.8pc/[dd]_{\, \, \f} \\ \bP^1 \ar[d]_{\, \, \, \ff}^{\, \, \, \G} \\ \bP^1  \\ }
$$
By a theorem of Dickson, $\G$ is isomorphic to one of the following:  $\, \Z_n$, $D_n$, $A_4$, $S_4$, $A_5$ with
branching indices of the  corresponding cover $\bP^1 \to \bP^1/ \G$ given respectively by
$$(n,n), (2, 2, n), (2, 3, 3), (2, 4, 4), (2, 3, 5).$$  We focus on cases $\G \iso \Z_n, A_4$, other cases are
intended to be studied in \cite{Sh6}.

%***************************************************
\subsection{Hurwitz spaces of covers $\f: \X_g \to \bP^1$}
%**************************************************
%
Let $\M_g$ be the moduli space of curves of genus $g \geq 2$ and $\bP^1=\bP^1(k)$ the Riemann sphere.  Let  $\phi:
\X_g \to \bP^1$ be a degree  $n$ covering with $r$ branch points. By covering space theory, there is a tuple
$(\s_1, \dots , \s_r)$ in $S_n$ such that $\s_1 \cdot \cdot \cdot \s_r =1$ and $G:=< \s_1, \dots , \s_r> $ is a
transitive group in $S_n$. We call such a tuple the {\it signature} of $\phi$. We say that a  permutation is of
type  $n^p$ if  it is a product  of $p$ disjoint  $n$-cycles.

Conversely, let $\s: = (\s_1, \dots , \s_r)$ be a tuple in $S_n$ such that $\s_1 \cdot \cdot \cdot \s_r =1$ and
$G:=< \s_1, \dots , \s_r> $ is a transitive group in $S_n$. We say that a cover $\phi: \X \to \bP^1$ of degree
$n$ is of type $\s$ if it has $\s$ as signature. The genus $g$ of $\X$ depends only on $\s$ (Riemann-Hurwitz
formula). Let $\H_\s$ be the set of pairs $([f], (p_1, \dots , p_r)$, where $[f]$ is an equivalence class of
covers of type $\s$, and $p_1, \dots , p_r$ is an ordering of the branch points of $\phi$.  The {\it Hurwitz
space} $\H_\s$ is a quasiprojective variety; see \cite{FrV}. We have a morphism
$$\Phi_\s: \H_\s \to \M_g$$
mapping $([f], (p_1, \dots , p_r)$ to the class $[\X]$ in the moduli space $\M_g$.  Each component of $\H_\s$ has
the same image in $\M_g$.

We denote  by $C:=(C_1, \dots , C_r)$, where $C_i$ is the conjugacy class of $\s_i$ in $G$.  The set of Nielsen
classes $\N(G, C)$ is
$$\N(G, C):=\{ (\s_1, \, \dots \, , \s_r) \, |  \, \,  \s_i \in C_i,
\, G=< \s_1, \dots , \s_r >, \, \,  \s_1 \cdots \s_r=1  \} $$
Fix a base point $\l_0\in \bP^1\setminus S$ where $S$ is the set of branch points. Then $\pi_1(\bP^1\setminus S)$
is generated by homotopy classes of loops $\g_1, \dots , \g_r$. The braid group acts on $\N(G, C)$ as
$$[\g_i]: \quad (\s_1, \, \dots \, , \s_r) \to (\s_1, \, \dots , \,
\s_{i-1},    \s_i \s_{i+1} \s_i^{-1}      , \s_i, \s_{i+2}, \dots , \s_r)
$$
The orbits of this action are called the {\it braid orbits} and correspond to the irreducible components of $\H
(G,C):=\H_\s$.

\begin{lemma} Let $\X_g$ be a genus $g\geq 2$ hyperelliptic curve with
 $\G:= \Z_n, A_4$. Then,  $G:=\Aut(\X_g)$, the dimension $\delta$ of
$\L_G^\s$, the signature $\s$,  and the number of involutions $i(G)$ of $G$ are as follows:
\begin{table} [ht]
\begin{center}
\renewcommand{\arraystretch}{1.24}
\begin{tabular}{||c|c|c|c|c|c|c||}
\hline \hline Case &$G$ &$\G$& $\delta=dim(\L_G^\s)$  & $\delta\neq $ &$\s=(\s_1, \dots , \s_r)$ & i(G)  \\ \hline
\hline 1 &$\Z_2\o \Z_n $& &$\frac {2g+2} n - 1$  & $\delta\neq 0, 1$ & $(n^2, n^2, 2^n, \dots , 2^n)$ & 3 \\ 2
&$\Z_{2n}$ & $\Z_n$&$\frac {2g+1} n -1$   & & $(n^2, 2n, 2^n, \dots , 2^n )$ &  1  \\ 3 & $\Z_{2n}$& &$\frac {2g}
n -1 $ & $\delta\neq 0, 1$ &  $(2n, 2n, 2^n,\dots , 2^n)$   &  1  \\ \hline \hline 4 & $\Z_2\o A_4$ & &$\frac
{g+1} 6$ & & $(3^8, 3^8, 2^{12}, \dots , 2^{12} )$ &    \\ 5 &$\Z_2\o A_4$ & &$\frac {g-1} 6$  &   & $(3^8, 6^4,
2^{12}, \dots , 2^{12} )$ &  7 \\ 6 & $\Z_2\o A_4$&$A_4$&$\frac {g-3} 6$   & $\delta \neq 0$   &$(6^4, 6^4,
2^{12}, \dots , 2^{12})$   &    \\ 7 & $SL_2(3) $ &  & $\frac {g-2} 6$ & $\delta \neq 0$   & $(4^6, 3^8, 3^8,
2^{12}, \dots , 2^{12})$& \\ 8 & $SL_2(3) $ & & $\frac {g-4} 6$           & & $(4^6, 3^8, 6^4, 2^{12}, \dots ,
2^{12})$&1 \\ 9 & $SL_2(3) $ &  & $\frac {g-6} 6$ & $\delta \neq 0$   & $(4^6, 6^4, 6^4, 2^{12}, \dots , 2^{12})$&
\\ \hline \hline
\end{tabular}
\end{center}
\caption{Hyperelliptic curves with reduced automorphism group $\Z_n, A_4$}
\end{table}
\end{lemma}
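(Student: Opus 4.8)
The plan is to treat this as a case-by-case verification using the classification of reduced automorphism groups via Dickson's theorem (cited above) together with the Riemann–Hurwitz formula. The reduced group $\G$ acts on $\bP^1$ with the standard branching data: for $\G\iso\Z_n$ the quotient cover $\ff:\bP^1\to\bP^1$ has signature $(n,n)$, and for $\G\iso A_4$ it has signature $(2,3,3)$. First I would normalize coordinates so that, in the $\Z_n$ case, the two branch points of $\ff$ are $0$ and $\infty$ and the generator acts by $X\mapsto\zeta_n X$; in the $A_4$ case, I would use the known fixed-point orbits of $A_4$ on $\bP^1$ of sizes $6$ (over the order-$2$ branch point), $4$, and $4$ (over the two order-$3$ branch points). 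The Weierstrass set $\P$ of $2g+2$ points must be $\G$-invariant, so it decomposes into $\G$-orbits; listing the possible orbit types and how the hyperelliptic involution $z_0$ interacts with lifts of elements of $\G$ gives exactly the finite list of cases in the table.

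The core computation in each row is of two kinds. First, computing $G=\Aut(\X_g)$: this is the extension of $\G$ by $\langle z_0\rangle$, and one must decide whether it splits and whether it is abelian. For $\G\iso\Z_n$ this is a standard argument — a lift of the order-$n$ generator has order $n$ or $2n$ depending on whether the two fixed points of the $\Z_n$-action on $\bP^1$ are Weierstrass points (giving $\Z_{2n}$ in cases 2 and 3) or not (giving $\Z_2\o\Z_n$ in case 1), and the three sub-cases (both, one, or neither fixed point branched in $\X_g\to\bP^1$) produce precisely signatures $(n^2,n^2,2^n,\dots)$, $(n^2,2n,2^n,\dots)$, $(2n,2n,2^n,\dots)$. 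For $\G\iso A_4$ one knows $G$ must be a central $\Z_2$-extension of $A_4$, of which there are exactly two up to isomorphism, $\Z_2\o A_4$ and $SL_2(3)$; which one occurs is again dictated by whether the length-$6$ orbit (resp. the length-$4$ orbits) lies in $\P$, i.e. by the structure of the ramification. Second, computing the dimension $\d=\dim\L_G^\s$: this is the dimension of the corresponding Hurwitz space, which by the discussion of $\H_\s$ in Section 2.2 equals $r-3$ where $r$ is the number of branch points of $\f:\X_g\to\bP^1$; one then checks $r-3$ against the stated formula by counting, via Riemann–Hurwitz applied to $\f$ (or equivalently to $\X_g\to\bP^1\to\bP^1$ with the known $\ff$), how many of the $2g+2$ Weierstrass points lie in nontrivial $\G$-orbits of each size. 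This yields $\d=\frac{2g+2}{n}-1$, $\frac{2g+1}{n}-1$, $\frac{2g}{n}-1$ in cases 1--3 and $\frac{g+1}{6},\frac{g-1}{6},\frac{g-3}{6}$ (resp. $\frac{g-2}{6},\frac{g-4}{6},\frac{g-6}{6}$) in cases 4--9, together with the divisibility constraints that make $\d$ an integer (which are exactly the implicit hypotheses on $g$). The count of involutions $i(G)$ is then read off directly from the group structure: $\Z_2\o\Z_n$ has $3$ involutions when $n$ is even, $\Z_{2n}$ has $1$, $\Z_2\o A_4$ has $7$ (the identity-component involution plus the six products of $z_0$ with nothing... rather, the central involution plus the three involutions of $A_4$ plus their products with the center, i.e. $1+3+3=7$), and $SL_2(3)$ has a unique involution.

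The genuinely delicate point — and the one I would spend the most care on — is establishing that the list of rows is \emph{exhaustive}, i.e. that no other signature $\s$ can arise for a hyperelliptic $\X_g$ with $\bAut(\X_g)\iso\Z_n$ or $A_4$. This requires arguing that the only $\G$-orbits on $\bP^1$ that can appear in $\P$ beyond the generic free orbits (of size $|\G|/2$, contributing the $2^n$ or $2^{12}$ entries... more precisely of size $|\G|$ in $\bP^1$ but the involution pairs them) are the exceptional orbits over the branch points of $\ff$, and that $z_0$ must swap the two points in each fibre of $\X_g\to\bP^1$ over a non-branch point of $\ff$ — so that the part of $\s$ coming from generic orbits is forced. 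One must also rule out configurations producing a larger group (e.g. showing that if too many exceptional orbits are simultaneously in $\P$ then $\G$ would not be merely $\Z_n$ or $A_4$ but $D_n$, $S_4$, etc., contradicting the hypothesis). I would handle this by a clean orbit-counting argument over $\bP^1$ combined with the observation that $2g+2=\sum(\text{orbit sizes})$ forces the exceptional-orbit contributions, leaving only finitely many admissible $\s$; checking these against Dickson's list eliminates the spurious ones. The remaining verifications (exact group extension, dimension formula, $i(G)$) are then routine in each of the nine rows.
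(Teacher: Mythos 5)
Your outline is mathematically sound, but it is worth knowing that the paper does not prove this lemma at all: its entire ``proof'' is the sentence that the result is elementary and follows from Bujalance--Gamboa--Gromadzki \cite{Bu}, where the full automorphism groups of hyperelliptic Riemann surfaces are classified. What you propose is essentially a self-contained reconstruction of the relevant part of that classification: normalize the $\G$-action on $\bP^1$, decompose the Weierstrass set $\P$ into $\G$-orbits (generic orbits of size $|\G|$ plus the exceptional orbits over the branch points of $\ff$), read off the signature and the extension class of $G$ over $\G$ from which exceptional orbits lie in $\P$, and get $\d$ from $r-3$ via Riemann--Hurwitz. This buys a verifiable argument where the paper offers only a citation, and your exhaustiveness discussion (why only the exceptional orbits can contribute non-$2^{|\G|}$ entries, and why the involution $z_0$ forces the generic entries) is exactly the point that needs care and that the paper leaves implicit. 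Two small remarks: (i) $\dim\H_\s=r$, and it is the image $\L_G^\s=\Phi_\s(\H_\s)$ in moduli, after quotienting the base by $PGL_2(k)$, that has dimension $r-3$ --- your phrasing conflates the two, though your computation is the right one; (ii) your caveat that $\Z_2\o\Z_n$ has $3$ involutions only when $n$ is even is correct and is in fact a precision the table itself omits (in case~1 the condition $n\mid 2g+2$ does not force $n$ even, and for $n$ odd one has $\Z_2\o\Z_n\iso\Z_{2n}$ with a single involution). Neither point is a gap in your argument.
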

\begin{proof}
The proof is elementary and follows from results in \cite{Bu}.
\end{proof}

Let $G:=\Z_2 \o A_4, SL_2(3)$. Denote by $\H_G^g$ the Hurwitz space $\H(G,C)$ of covers  given in Table 1.  Spaces
$\H_G^g$ are irreducible. To show this we have to show that there is only one braid orbit. By applying the braid
action one can assume that the signature is given as  $\s=( \s_1, \s_2, \s_3, \a, \dots , \a)$ where $\s_1, \s_2,
\s_3$ are as in Table 1, and $\a$ is an involution. Thus, we are looking for 4-tuples $(\s_1, \s_2, \s_3, \a)$ or
3-tuples $(\s_1, \s_2, \s_3)$, depends whether $r$ is even or odd,  which are transitive  in $S_{24}$ and generate
$G$. A computer search would show that there is only such braid orbit.  For $r=4$ these spaces are genus zero
curves as can be shown by a direct computation. In section 5 we will describe  these spaces algebraically.
\begin{example}Let $g=5$. Then, $\H_G^5$ has genus 0. There are 6
Nielsen classes. One of them is $\s=(\a, \a, \beta, \beta^{-1})$, where $\a, \beta $ are as below:
\begin{small}
\begin{eqnarray*}
\a =  & ( 1, 2)( 3, 4)( 5, 6)( 7, 8)(
9,10)(11,12)(13,14)(15,16)(17,18)(19,20)(21,22)(23,24), \\
\b =  & (1, 2, 3)( 4, 5, 6) ( 7, 8, 9)(10,11,12)(13,14,15)(16,17,18)(19,20,21)(22,23,24).
\end{eqnarray*}
\end{small}
\end{example}

%\vspace{-8mm}

\begin{remark}
An interesting problem would be to decide if $\Phi_\s (\H_\s) \subset \fH_g$?  In other words, are there any genus
$g$ non-hyperelliptic curves $\Y_g$ such that there is a cover $\f : \Y_g \to \bP^1$ with signature as in Table 1.
\end{remark}

%\begin{remark}
For the rest of this paper $\L_g^G(\s)$ is denoted by $\L_g^G$, since for each genus  $g$ there is exactly one
signature $\s$.
%\end{remark}

%************************************************************
\subsection{Invariants of Binary Forms}
%************************************************************
In this section we define the action of $ GL_2(k)$ on binary forms and discuss the basic notions of their
invariants. Let $k[X,Z]$  be the  polynomial ring in  two variables and  let $V_d$ denote  the
$(d+1)$-dimensional  subspace  of  $k[X,Z]$  consisting of homogeneous polynomials.
\begin{equation}  \label{eq1}
f(X,Z) = a_0X^d + a_1X^{d-1}Z + ... + a_dZ^d
\end{equation}
of  degree $d$. Elements  in $V_d$  are called  {\it binary  forms} of degree $d$.  We let $GL_2(k)$ act as a
group of automorphisms on $ k[X, Z] $   as follows:
\begin{equation}
 M =
\begin{pmatrix} a &b \\  c & d
\end{pmatrix}
\in GL_2(k), \textit{   then       }
\quad  M  \begin{pmatrix} X\\ Z \end{pmatrix} =
\begin{pmatrix} aX+bZ\\ cX+dZ \end{pmatrix}
\end{equation}
This action of $GL_2(k)$  leaves $V_d$ invariant and acts irreducibly on $V_d$.
\begin{remark}
It is well  known that $SL_2(k)$ leaves a bilinear  form (unique up to scalar multiples) on $V_d$ invariant. This
form is symmetric if $d$ is even and skew symmetric if $d$ is odd.
\end{remark}
Let $A_0$, $A_1$,  ... , $A_d$ be coordinate  functions on $V_d$. Then the coordinate  ring of $V_d$ can be
identified with $ k[A_0  , ... , A_d] $. For $I \in k[A_0, ... , A_d]$ and $M \in GL_2(k)$, define $I^M \in k[A_0,
... ,A_d]$ as follows
\begin{equation} \label{eq_I}
{I^M}(f):= I(M(f))
\end{equation}
for all $f \in V_d$. Then  $I^{MN} = (I^{M})^{N}$ and Eq.~(\ref{eq_I}) defines an action of $GL_2(k)$ on $k[A_0,
... ,A_d]$.
A homogeneous polynomial $I\in k[A_0, \dots , A_d, X, Z]$ is called a {\it covariant}  of index $s$ if
$$I^M(f)=\delta^s I(f),$$
where $\delta =\det(M)$.  The homogeneous degree in $a_1, \dots , a_n$ is called the {\it degree} of $I$,  and the
homogeneous degree in $X, Z$ is called the {\it  order} of $I$.  A covariant of order zero is called {\it
invariant}.  An invariant is a $SL_2(k)$-invariant on $V_d$.

We will use the symbolic method of classical theory to construct covariants of binary forms.    Let
$$f(X,Z):=\sum_{i=0}^n
\begin{pmatrix} n \\ i
\end{pmatrix}
a_i X^{n-i} \, Z^i, \quad  and \quad g(X,Z) :=\sum_{i=0}^m
  \begin{pmatrix} m \\ i
\end{pmatrix}
b_i X^{n-i} \, Z^i
$$
be binary forms of  degree $n$ and $m$ respectively with coefficients in $k$. We define the {\bf r-transvection}
$$(f,g)^r:= \frac {(m-r)! \, (n-r)!} {n! \, m!} \, \, \sum_{k=0}^r
(-1)^k
\begin{pmatrix} r \\ k
\end{pmatrix} \cdot
\frac {\partial^r f} {\partial X^{r-k} \, \,  \partial Z^k} \cdot \frac {\partial^r g} {\partial X^k  \, \,
\partial Z^{r-k} }
 $$
 It is a homogeneous polynomial in $k[X, Z]$ and therefore a covariant
of order $m+n-2r$ and degree 2. In general, the $r$-transvection of two covariants of order $m, n$ (resp., degree
$p, q$) is a covariant of order $m+n-2r$  (resp., degree $p+q$).

For the rest of this paper $F(X,Z)$ denotes a binary form of order $d:=2g+2$ as below
\begin{equation}
F(X,Z) =   \sum_{i=0}^d  a_i X^i Z^{d-i} = \sum_{i=0}^d
\begin{pmatrix} n \\ i
\end{pmatrix}    b_i X^i Z^{n-i}
\end{equation}
where $b_i=\frac {(n-i)! \, \, i!} {n!} \cdot a_i$,  for $i=0, \dots , d$.  We denote invariants (resp.,
covariants) of binary forms by $I_s$ (resp., $J_s$) where the subscript $s$ denotes the degree (resp., the order).
We define the following covariants and invariants:
\begin{equation}
\begin{split}\label{covar}
\aligned
 I_2 & :=(F,F)^d,   \\
 I_4 & :=(J_4, J_4)^4,  \\
 I_6 & :=((F, J_4)^4, (F, J_4)^4)^{d-4},   \\
 I_6^\ast & :=((F, J_{12})^{12}, (F, J_{12})^{12})^{d-12},  \\
  M  & :=((F, J_4)^4, (F, J_8)^8)^{d-10}, \\
\endaligned
\qquad \aligned
& J_{4j}   :=   (F,F)^{d-2j}, \, \,  j=1, \dots , g, \\
& I_4'    :=     (J_8, J_8)^8, \\
& I_6^\prime  :=((F, J_8)^8, (F, J_8)^8)^{d-8}, \\
& I_3      :=(F, J_d)^d, \\
& I_{12}   :=(M, M)^8\\
\endaligned
\end{split}
\end{equation}
{\it Absolute invariants} are called $GL_2(k)$-invariants. We  define the following absolute invariants:
$$i_1:=\frac {I_4'} {I_2^2}, \, \,  i_2:=\frac {I_3^2} {I_2^3},
\, \,  i_3:=\frac {I_6^\ast  } {I_2^3}, \, \,  j_1 := \frac {I_6^{'}} {I_3^2}, \, \,   j_2:= \frac {I_6} {I_3^2},
\, \, s_1:=\frac {I_6^2} {I_{12}}, \, \, s_2:=\frac {(I_6^{'})^2} {I_{12}}
$$
$$
\v_1:= \frac {I_6} {I_6^\ast }, \, \,  \v_2:=\frac {(I_4^{'})^3} {I_3^4}, \, \, \v_3:= \frac {I_6} {I_6^{'}}, \,
\,  \v_4:=\frac {(I_6^\ast )^2} {I_4^3}.
$$
In the case $g=10$ and $I_{12}=0$ we define
\begin{equation}
\begin{split}
 I_6^\star & := ( (F, J_{16})^{16},  (F, J_{16})^{16} )^{d-16}), \\
S   \, \, & :=( J_{12}, J_{16} )^{12}, \\
I_{12}^\ast & := ( \, (J_{16}, S)^4, \, (J_{16}, S)^4 \, )^{12}\\
\end{split}
\end{equation}
and $$\v_5\,  :=\frac {I_6^\star }{I_{12}^\ast}.$$

\noindent For a given curve $\X_g$ we denote by $I(\X_g)$ or $i(\X_g)$ the corresponding  invariants.
\begin{remark}
We will only perform computations on subvarieties $\L_g^G\subset \fH_g$ of  dimension $\delta \leq 1$,  hence
don't need other absolute invariants.
\end{remark}

%*************************************************
\section{The reduced automorphism group is  cyclic}
%*************************************************
%
Let $\bAut (\X_g)\iso \Z_n$.   Then, $\ff: \bP^1 \to \bP^1$ has
 signature $(n, n)$. We identify the branch points of $\ff$ with $0,
 \infty$ and the ramified points in their fibers by $0, \infty$
 respectively. Hence, $\ff(X) = X^n$.  We denote by $V:=\ff^{-1}(0)
 \cup \ff^{-1}(\infty)$. In this section  $e_t$ denotes the $t$-th
root of unity, $G$ and $\d$ are as in first three cases of Table 1.

\medskip
%*****************************************************
\noindent {\bf Case 1:}
%************************************************
If $V \cap \P =\emptyset$,  then $n\, | \, 2g+2$ and the equation of the curve is
$$Y^2= \prod_{i=1}^t (X^n-q_i)$$
where $q_i$'s are the branch points of $\f : \X_g \to \bP^1$ not in $\{0, \infty\}$ and $t = \frac {2g+2} n$.  Let
$a_1, \dots , a_t$ denote the symmetric polynomials in $q_1, \dots , q_t$. Further we can take $q_1 \dots q_t=1$.
Hence the equation of the curves is
\begin{equation}
Y^2= X^{2g+2} + a_1 X^{n \, (t-1)} + \dots + a_i X^{n\, (t-i)} + \dots + a_{\d} X^n + 1.
\end{equation}
 We need to determine to what extent the normalization   above
determines the coordinate $X$. Let $\g$ generate $\Z_n$. Then, $\g(X)=e_n X$. This  condition  determines the
coordinate $X$ up to a coordinate change by some $\a \in PGL_2(k)$ centralizing $\g$. Such $\a$ satisfies $\a
(X)=mX$ or $\a (X) = \frac m X$, $m \in k\setminus \{0\}$. The additional condition $(-1)^t q_1 \cdot \cdot \cdot
\cdot q_t=1$ forces
$$(-1)^t \, \g(q_1) \dots \g(q_t)=1.$$
Hence,   $m^t=1$. So $X$ is determined up to a coordinate change by the subgroup $H\iso D_{2t}$ of $PGL_2(k)$
generated by $\tau_1: X\to \e_t X$, $\tau_2: X\to \frac  1 X$, where $\e_t$ is a primitive $t$-th root of unity.

%********************************************************
\medskip
\noindent {\bf Case 2:}
%********************************************************
If $|V \cap \P|=1$ then 0 or $\infty$ is a Weierstrass point. Then, $n \, | \, 2g+1$  and the equation of the
curve is
$$Y^2=  \prod_{i=1}^t (X^n-q_i)$$
where $q_i$'s are the branch points of $\f : \X_g \to \bP^1$ and $t = \frac {2g+1} n$. Hence the equation of the
curve is
\begin{equation}
Y^2=  X^{2g+1} + a_1 X^{n \, (t-1)} + \dots + a_{\d} X^n + 1
\end{equation}
Then $X$ is determined up to a coordinate change by the subgroup
 $H:=\< \tau_1, \tau_2 \>$  of $PGL_2(k)$ such that $\tau_1: X\to \e_t X$,
$\tau_2: X \to \frac 1 X$.
%

%********************************************************
\medskip
\noindent {\bf Case 3:}
%********************************************************
If $|V \cap \P|=2$ then $n\, | \, 2g$ and the curve has equation
\begin{equation}
Y^2=X(X^{nt}+ a_1 X^{n(t-1)} + \dots +  a_{\d} X^n +1)
\end{equation}
where $t = \frac {2g} n$.  Then $X$ is determined up to a coordinate change by the subgroup $H:=\< \tau_1, \tau_2
\>$ of $PGL_2(k)$ such that $\tau: X\to \e_t X$, $\tau_2: X\to \frac 1 X$.

\bigskip

\noindent Now we consider all three cases.  $H$ acts on $k(a_1, \dots , a_\d )$ as follows:
\begin{eqnarray*}
 \tau_1: &\quad   a_i \to \e^{d-ni} a_i,  & \quad for \quad i=1, \dots , \d  \\
 \tau_2: &\quad  a_i \to a_{t-i},        &  \quad for \quad i=1, \dots , [\frac {\d + 1} 2]
\end{eqnarray*}
Thus, the fixed field $k(a_1, \dots , a_{\d})^H$ is the same as the function field of the variety $\L_g^G$. We
summarize in  the following:

\begin{lemma}   $k(\L_g^G)=k(a_1, \dots , a_{\d})^H$.
\end{lemma}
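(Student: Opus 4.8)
The plan is to show that the two fields $k(\L_g^G)$ and $k(a_1,\dots,a_\d)^H$ coincide by realizing both as the function field of the quotient of affine $\d$-space by the action of the finite group $H$, and then matching this quotient with $\L_g^G$ via the normal-form analysis carried out above. Concretely, in each of the three cases the discussion preceding the statement produces a surjective, generically finite parametrization $\mathbb{A}^\d \dashrightarrow \L_g^G$ sending $(a_1,\dots,a_\d)$ to the isomorphism class of the curve with the displayed equation; the content of the lemma is that two tuples give isomorphic curves (hence the same point of $\fH_g$) if and only if they lie in the same $H$-orbit.

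First I would set up the map precisely. For a tuple $\u=(a_1,\dots,a_\d)$ in the relevant open locus (where the binary form on the right has distinct roots, so the curve is smooth of genus $g$), let $\X_\u$ be the curve with the case-appropriate equation, and let $\pi:\mathbb{A}^\d\dashrightarrow\fH_g$, $\u\mapsto[\X_\u]$. By construction $\L_g^G=\overline{\pi(\mathbb{A}^\d)}$, so $k(\L_g^G)$ is the subfield of $k(a_1,\dots,a_\d)$ consisting of those rational functions that are constant on the fibers of $\pi$. Thus it suffices to prove that the generic fiber of $\pi$ is exactly an $H$-orbit. The inclusion ``$H$-orbit $\subseteq$ fiber'' is immediate: each generator $\tau_1,\tau_2$ of $H$ was obtained precisely as the effect, on the coefficients $a_i$, of a coordinate change in $PGL_2(k)$ preserving the chosen normal form, so $\X_{\tau(\u)}\iso\X_\u$. (I would record here that $\tau_2$ corresponds to $X\mapsto 1/X$ and that in cases 1,2 one also uses $Y\mapsto Y/X^{g+1}$, resp.\ $Y\mapsto Y/X^{g+1}\cdot(\text{const})$, to restore the monic/normalized shape, and in case 3 similarly; the displayed action $\tau_1:a_i\mapsto\e^{d-ni}a_i$, $\tau_2:a_i\mapsto a_{t-i}$ is exactly this.)

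The reverse inclusion ``fiber $\subseteq$ $H$-orbit'' is where the real argument lies, and it is the step I expect to be the main obstacle. Suppose $\X_\u\iso\X_{\u'}$. Such an isomorphism is induced by some $\beta\in PGL_2(k)$ carrying the branch locus of one curve to that of the other and respecting the hyperelliptic structure; because $\bAut(\X_g)\iso\Z_n$ is cyclic, the distinguished order-$n$ reduced automorphism $\g:X\mapsto e_nX$ is, up to the center, canonically attached to the curve (for $\d\geq 2$ it is, up to conjugacy and inversion, the \emph{only} such element, $\Z_n$ being its own reduced automorphism group generically on $\L_g^G$). Hence $\beta$ must conjugate the $\g$ of $\X_\u$ to the $\g$ of $\X_{\u'}$, forcing $\beta$ to normalize the cyclic group $\<X\mapsto e_nX\>$ in $PGL_2(k)$; the normalizer of a non-trivial cyclic group fixing $\{0,\infty\}$ is the infinite dihedral-type group $\{X\mapsto mX\}\cup\{X\mapsto m/X\}$. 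This already pins $\beta$ down to $X\mapsto mX$ or $X\mapsto m/X$, exactly the $\a\in PGL_2(k)$ considered in the text. Then the normalization constraints on the constant and leading coefficients (the conditions $q_1\cdots q_t=1$ in cases 1,2, and the monic/constant-term-$1$ shape in case 3) were shown above to force $m^t=1$, i.e.\ $\beta\in H$; consequently $\u'=\beta(\u)$ lies in the $H$-orbit of $\u$. The one genuinely delicate point to handle carefully is the exceptional behaviour for small $\d$ (the $\d\neq 0,1$ clauses in Table 1): for $\d=0$ the locus is a point and the statement is vacuous, while for $\d=1$ one checks the one-parameter family directly; for $\d\geq 2$ the ``$\g$ is essentially unique'' argument is clean because the full automorphism group drops to $\Z_{2n}$ or $\Z_2\o\Z_n$ generically and no larger reduced group appears.

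Having established that the generic fiber of $\pi$ is an $H$-orbit, I conclude: a rational function $\xi\in k(a_1,\dots,a_\d)$ descends to $\L_g^G$ iff it is constant on $H$-orbits iff $\xi\in k(a_1,\dots,a_\d)^H$. Since $\pi$ is dominant onto $\L_g^G$, the first condition characterizes exactly $k(\L_g^G)$ as a subfield of $k(a_1,\dots,a_\d)$, and therefore $k(\L_g^G)=k(a_1,\dots,a_\d)^H$, which is the assertion. Finally, as a bonus consistent with the statement in the introduction, since $H$ is a finite group acting on a rational function field, $k(a_1,\dots,a_\d)^H$ is again rational by a theorem of Fischer (for the abelian-by-cyclic structure here it is elementary), giving rationality of $\L_g^G$ and explicit generators $u_1,\dots,u_\d$ as $H$-invariant rational functions in the $a_i$.
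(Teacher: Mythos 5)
Your overall strategy is the right one and is essentially what the paper leaves implicit (the paper records this lemma as a summary of the preceding normalization discussion, with no written proof): identify the generic fiber of the parametrization $(a_1,\dots,a_\d)\mapsto[\X_\u]$ with an $H$-orbit, then use that a rational function descends along a dominant, generically finite map if and only if it is constant on generic fibers. Your ``fiber $\subseteq$ orbit'' direction is also the correct substantive argument: an isomorphism of hyperelliptic curves descends to some $\beta\in PGL_2(k)$ which, generically on the locus, must normalize $\<\g\>=\<X\mapsto e_nX\>$, hence has the form $X\mapsto mX$ or $X\mapsto m/X$, and the normalization of the constant term forces $m^t=1$.

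However, the step you call ``immediate'' --- that each generator of $H$ preserves the chosen normal form, so that the $H$-orbit is contained in the fiber --- fails in Case 2. There $\deg F=2g+1$ is odd, so of the two fixed points $0,\infty$ of $\g$ exactly one (namely $\infty$) is a Weierstrass point, while $F(0)=1\neq0$. The substitution $X\mapsto 1/X$, $Y\mapsto Y/X^{g+1}$ turns $Y^2=X^{2g+1}+\dots+1$ into $Y^2=X\,(1+a_1X^n+\dots+X^{2g+1})$, i.e.\ into the shape of Case 3, not back into the Case 2 normal form; intrinsically, $\tau_2$ would have to exchange a Weierstrass point with a non-Weierstrass point, so $\X_{\tau_2(\u)}\not\cong\X_\u$ for generic $\u$ (for instance, with $n=3$, $g=4$, the curves $Y^2=X^9+aX^6+bX^3+1$ and $Y^2=X^9+bX^6+aX^3+1$ are generically non-isomorphic). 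Indeed your own normalizer argument, carried out in Case 2, forces $\beta(\infty)=\infty$ and hence $\beta(X)=mX$, so the generic fiber there is only the $\<\tau_1\>$-orbit. Thus either the lemma must be read with $H=\<\tau_1\>$ in Case 2, or the inclusion ``orbit $\subseteq$ fiber'' requires an argument that is not available; as written, your proof asserts an equality of fields that is strict in that case. Cases 1 and 3, where $0$ and $\infty$ are respectively both non-Weierstrass and both Weierstrass (hence genuinely interchangeable), go through exactly as you describe.
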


\noindent The following
\begin{equation}\label{u_i}
u_i:=  a_1^{t-i} \, a_i \, + \, a_{\d}^{t-i} \, a_{t-i}, \quad for
\quad 1 \leq i \leq \d\\
\end{equation}
are called {\it dihedral invariants} for the genus  $g$ and the tuple
$$\u:=(u_1, \dots , u_\d)$$ is called the {\it tuple of dihedral
invariants}.  It can be checked that  $\u=0$ if and only if $a_1=a_\d=0$. In this case replacing $a_1, a_\d$ by
$a_2, a_{\d-1}$ in the formula above would give new invariants. We would focus in the case that $\u \neq 0$, as
the other cases are simpler.
The next theorem shows that the dihedral invariants generate $k(\L_g^G)$.
\begin{theorem}
Let $ \L_g^G$  be as in cases  1, 2, 3, of Lemma 2.1. and $\d=\dim(\L_g^G)$. Then, $k(\L_g^G) =k(\u_1, \dots ,
\u_\d)$.
\end{theorem}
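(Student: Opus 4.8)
The plan is to show the two function fields $k(\L_g^G) = k(a_1,\dots,a_\d)^H$ (Lemma 3.1) and $k(u_1,\dots,u_\d)$ coincide by a dimension-count combined with an explicit argument that the $u_i$ separate generic $H$-orbits. First I would verify that each $u_i$ defined in (\ref{u_i}) is genuinely $H$-invariant: applying $\tau_1\colon a_i\mapsto \e^{d-ni}a_i$ to $u_i=a_1^{t-i}a_i + a_\d^{t-i}a_{t-i}$ one checks the exponent of $\e$ in each of the two monomials is $(t-i)(d-n)+(d-ni) = (t-i)(d-n)+(d-ni)$, which reduces to $0 \pmod n$ since $d = nt$ (respectively $nt+1$, $nt$ with the $X$ or $X\cdot(\cdots)$ factor absorbing the discrepancy in Cases 2, 3); similarly the second monomial's exponent vanishes, and $\tau_2\colon a_i\leftrightarrow a_{t-i}$ simply swaps the two monomials, fixing $u_i$. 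Hence $k(u_1,\dots,u_\d)\subseteq k(a_1,\dots,a_\d)^H = k(\L_g^G)$.

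For the reverse inclusion, the key step is to exhibit $a_1,\dots,a_\d$ as algebraic functions over $k(u_1,\dots,u_\d)$ — in fact to reconstruct a representative of the $H$-orbit of $(a_1,\dots,a_\d)$ from the $u_i$ — so that $[k(a_1,\dots,a_\d):k(u_1,\dots,u_\d)]$ is finite, and then to argue the fixed field $k(a_1,\dots,a_\d)^H$ already lies inside $k(u_1,\dots,u_\d)$. Concretely, under the standing assumption $\u\neq 0$, i.e. $a_1$ and $a_\d$ are not both zero, one can normalize further: the residual coordinate changes in $H$ beyond those already used can be arranged to put $a_1 = a_\d$ (or $a_1 = 1$ after a suitable rescaling compatible with $\tau_1$), which pins down the orbit. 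Granting such a normalization, $u_1 = a_1^{t-1}a_1 + a_\d^{t-1}a_{t-1}$ together with $u_{t-1}$ determines $a_1$ (hence $a_\d$) up to finitely many choices, and then each $u_i = a_1^{t-i}a_i + a_\d^{t-i}a_{t-i}$ is a \emph{linear} equation in the pair $(a_i, a_{t-i})$; paired with the companion relation $u_{t-i} = a_1^{i}a_{t-i} + a_\d^{i}a_i$ one gets a $2\times 2$ linear system for $(a_i,a_{t-i})$ whose determinant is $a_1^{t-i}a_\d^i - a_1^i a_\d^{t-i}$, nonzero generically. Solving it expresses each $a_i$ rationally in the $u_j$ and the chosen root for $a_1$, showing $a_i\in k(u_1,\dots,u_\d)$ after the finite base change; taking $H$-invariants (which the $u_i$ already are) collapses the ambiguity and yields $k(\L_g^G)=k(a_1,\dots,a_\d)^H\subseteq k(u_1,\dots,u_\d)$.

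I would then also confirm the count is consistent: there are exactly $\d$ dihedral invariants $u_1,\dots,u_\d$, and $\L_g^G$ is $\d$-dimensional by Lemma 2.1, so the inclusion $k(u_1,\dots,u_\d)\subseteq k(\L_g^G)$ between fields of equal transcendence degree, combined with the algebraicity just established going the other way, forces equality (and in fact shows the $u_i$ are algebraically independent, giving the promised $1$--$1$ correspondence between generic points of $\L_g^G$ and tuples $\u$). The cases $\G\iso\Z_n$ with $G\iso \Z_{2n}$ versus $\Z_2\o\Z_n$ differ only in the exact shape of the defining polynomial of $\X_g$ (the extra factor of $X$ or shifted degree), which shifts the congruence bookkeeping for $\tau_1$ but not the structure of the argument, so the three cases of Lemma 2.1 can be handled uniformly with $t$ and $\d$ as tabulated.

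The main obstacle I anticipate is the careful treatment of the degenerate strata where the generic reconstruction breaks down: when the determinant $a_1^{t-i}a_\d^i - a_1^i a_\d^{t-i}$ vanishes (e.g. $a_1 = 0$ with $a_\d\neq 0$, handled by the parenthetical remark after (\ref{u_i}) about replacing $a_1,a_\d$ with $a_2,a_{\d-1}$), or $a_1 = \pm a_\d$, the $2\times2$ system is singular and one must either pass to these subvarieties separately or argue that they are proper closed subsets not affecting the generic function field — since the theorem is a statement about $k(\L_g^G)$, it suffices to show the birational equivalence holds on a dense open set, so these loci can be excised. The bookkeeping of which $t$-th roots of unity appear in $\tau_1$'s action on $a_i$ (the exponent $d - ni \bmod n$, together with the constraint $m^t=1$ already derived) needs to be pinned down in each of Cases 1--3 to be sure the $u_i$ are invariant and that the normalization $a_1 = a_\d$ (or similar) is actually attainable within $H$; this is routine but must be done explicitly rather than waved through.
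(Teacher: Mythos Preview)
Your approach is essentially the paper's: verify that the $u_i$ are $H$-invariant to get $k(\u)\subseteq k(\L_g^G)=k(a_1,\dots,a_\d)^H$, then reduce the reconstruction of $(a_1,\dots,a_\d)$ from $\u$ to the outer pair $(a_1,a_\d)$ via exactly the $2\times 2$ linear system you write down. The paper finishes by making the last step explicit: from $u_1=a_1^{t}+a_\d^{t}$ and $u_\d=2a_1a_\d$ one eliminates $a_1$ to obtain the single equation
\[
2^{t}\, a_\d^{2t} - 2^{t}\, u_1\, a_\d^{t} + u_\d^{t}=0,
\]
so $[k(a_1,\dots,a_\d):k(\u)]\le 2t=|H|$, which together with $[k(a_1,\dots,a_\d):k(a_1,\dots,a_\d)^H]=|H|$ forces $k(\u)=k(a_1,\dots,a_\d)^H$.

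One genuine gap to flag: your closing argument---``the inclusion $k(\u)\subseteq k(\L_g^G)$ between fields of equal transcendence degree, combined with the algebraicity just established, forces equality''---is not valid on its own. An algebraic extension of fields of the same transcendence degree over $k$ need not be trivial (think $k(t)\subset k(t^{1/2})$). What actually forces equality is the \emph{degree} bound $[k(a):k(\u)]\le 2t$ matched against $|H|=2t$. Your reconstruction already contains this count implicitly (there are precisely $2t$ ordered pairs $(a_1,a_\d)$ over a generic $\u$, and then the middle $a_i$ are uniquely determined), so the fix is simply to state that degree explicitly rather than retreat to the transcendence-degree sentence. With that adjustment your argument and the paper's coincide.
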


\begin{proof}The dihedral invariants are fixed by the $H$-action. Let
$\u=(u_1, \dots , u_{\d})$ be the  $\d$-tuple of dihedral invariants. Hence, $k(\u)\subset k(\L_g^G) $. Thus, it
is enough to show that $[k(a_1, \dots a_{\d}): k(\u)]=2t$. For each $2 \leq i \leq \d-1$ we have
\begin{eqnarray*}
& a_1^{\d-i+1} a_i + a_{\d}^{\d-i+1} a_{\d-i+1} = u_i\\
& a_1^i a_{\d-i+1} + a_{\d}^i a_i=u_{\d-i+1}
\end{eqnarray*}
giving $\, a_i, \, \, a_{t-i} \in k(\u, a_1, a_{\d})$. Then, the extension $k(a_1, \dots , a_{\d}) /  k(u_1, \dots
, u_{\d})$ has equation
\begin{equation}
2^{t}\, a_g^{2t } - 2^{t}\, u_1 \,  a_{\d}^{t} + u_{t-1}^{t}=0
\end{equation}
This  completes the proof.

\end{proof}
\begin{remark}
If $n=2$ then $G=V_4$. Then  $\L_g^G=\L_g$ where $\L_g$ is the locus of hyperelliptic curves with extra
involutions; see \cite{GS}.  A nice necessary and sufficient condition is found in \cite{Sh5} in terms of the
dihedral invariants for a curve to have more then three involutions in the reduced automorphism group. More
precisely, for such curves the relation $2^{g-1} u_1^2 - u_g^{g+1}=0$ holds.
\end{remark}
%
%
%***************************************
\section{The reduced automorphism group is isomorphic to $A_4$}
%**************************************
%
In this section we study genus $g$ hyperelliptic curves $\X_g$ with $\bAut(\X_g)\iso A_4$.  Thus, $A_4$ is the
monodromy group of a cover $\ff: \bP^1 \to \bP^1$ with signature $\ss:=(\s_1, \s_2, \s_3)$ of type $(2^6, 3^4,
3^4)$; see Table 1.  We denote by $q_1, q_2, q_3 $ the corresponding branch points of $\ff$.  Let $S$ be the set
of branch points of $\f: \X_g \to \bP^1$. Clearly $q_1, q_2, q_3 \in S$. As above  $\P$ denotes the images in
$\bP^1$ of Weierstrass points of $\X_g$ and $V:=\cup_{i=1}^3 \ff^{-1} (q_i) $.
\begin{lemma} % \label{lem}
Let $V, \P$ be as above and $g \neq 2, 3, 6$. Then the following hold:

i) if $\, \, |V \cap \P|=0, 4, 8, $ then  $\Aut(\X_g) \iso \Z_2 \o
 A_4$ and  $g\equiv -1, 1, 3 \mod 6$ respectively,

ii) if $\, \, |V \cap \P|=6, 10, 14, $ then $\Aut(\X_g) \iso SL_2(3)$ and  $g \equiv 2, 4, 0 \mod 6$ respectively.
\end{lemma}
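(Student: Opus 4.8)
The plan is to analyze the cover $\ff:\bP^1\to\bP^1$ with monodromy group $A_4$ and signature of type $(2^6,3^4,3^4)$, and to understand how the hyperelliptic double cover $\X_g\to\bP^1$ sits over it. First I would fix coordinates on the target $\bP^1=\bP^1/A_4$ so that the three branch points $q_1,q_2,q_3$ of $\ff$ are $0,1,\infty$, with $q_1$ (type $2^6$) over $\infty$, and $q_2,q_3$ (type $3^4$) over $0$ and $1$ say. Then $\ff^{-1}(q_1)$ consists of $6$ points each of ramification index $2$, while $\ff^{-1}(q_2)$ and $\ff^{-1}(q_3)$ each consist of $4$ points of ramification index $3$; thus $V=\ff^{-1}(q_1)\cup\ff^{-1}(q_2)\cup\ff^{-1}(q_3)$ has $6+4+4=14$ points. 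The key observation is that the $A_4$-action on $\bP^1$ permutes these three orbits, and any $A_4$-invariant set of points in $\bP^1$ must be a union of orbits; the orbits of size $\le 14$ are exactly these three (of sizes $6,4,4$) together with the generic orbit of size $12$. Hence $|V\cap\P|$, being the number of Weierstrass-point images lying in $V$, is a sum of a subset of $\{6,4,4\}$, i.e. lies in $\{0,4,8,6,10,14,12,16,18\}$; but $\P$ itself is $A_4$-invariant and consists of $2g+2$ points, so once we pin down which orbits of $\ff$ lie inside $\P$ we control everything.

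Next I would determine the full automorphism group $\Aut(\X_g)$ in terms of which orbits of $\ff$ are branch points of the hyperelliptic map, following the standard lifting analysis (as in Dickson/Brandt--Stichtenoth, reference \cite{Bu}). The reduced group $A_4$ lifts to a central extension $\widetilde{A_4}$ of $A_4$ by $\Z_2=\langle z\rangle$; the two possibilities are $\Z_2\times A_4$ (the split case) and $SL_2(3)$ (the non-split, i.e. the binary tetrahedral group). Which one occurs is governed by whether the involutions of $A_4$ lift to involutions or to elements of order $4$ in $\Aut(\X_g)$, and this in turn is determined by whether the fixed points of those involutions on $\bP^1$ — namely the points of $\ff^{-1}(q_1)$, the orbit of size $6$ — are Weierstrass points. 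Concretely: if the size-$6$ orbit over $q_1$ is \emph{not} contained in $\P$ then each involution in $A_4$ lifts to an involution, the extension splits, and $\Aut(\X_g)\iso\Z_2\o A_4$; if that orbit \emph{is} contained in $\P$, the involutions lift to order-$4$ elements, the extension is the binary tetrahedral group, and $\Aut(\X_g)\iso SL_2(3)$. This dichotomy matches the case split $|V\cap\P|\in\{0,4,8\}$ versus $|V\cap\P|\in\{6,10,14\}$, since $|V\cap\P|$ is odd-weighted precisely when the $6$-orbit is included: $\{0,4,8\}$ are the subset-sums of $\{4,4\}$, while $\{6,10,14\}=\{6+0,6+4,6+8\}$ are those of $\{4,4\}$ shifted by $6$.

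Finally I would read off the congruence on $g$ in each subcase by writing down the equation of the curve and counting. In the $SL_2(3)$ cases one gets $\X_g:\ Y^2 = (\text{size-}6\text{ factor})\cdot(\text{optional size-}4\text{ factors})\cdot(\text{a product of }m\text{ generic size-}12\text{ orbits})$, so $2g+2 = 6 + 4\varepsilon_1 + 4\varepsilon_2 + 12m$ with $\varepsilon_1,\varepsilon_2\in\{0,1\}$ recording whether the two size-$4$ orbits over $q_2,q_3$ are Weierstrass orbits; then $|V\cap\P| = 6+4\varepsilon_1+4\varepsilon_2$ equals $6,10,14$ according as $\varepsilon_1+\varepsilon_2=0,1,2$, and correspondingly $2g+2\equiv 6,10,14\equiv 6,4,2\pmod{12}$, i.e. $g\equiv 2,4,0\pmod 6$. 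The $\Z_2\o A_4$ cases are identical with the size-$6$ factor absent: $2g+2 = 4\varepsilon_1+4\varepsilon_2+12m$, giving $|V\cap\P|=0,4,8$ and $g\equiv -1,1,3\pmod 6$. The hypothesis $g\neq 2,3,6$ is exactly what is needed to rule out the degenerate possibility $m=0$ in the smallest cases, where the naive dimension count in Table 1 would force a negative-dimensional or empty locus and the above analysis of orbits could break down. The main obstacle I anticipate is the lifting step: carefully justifying that the splitting type of the central extension $1\to\langle z\rangle\to\Aut(\X_g)\to A_4\to1$ is controlled precisely by whether the involution-fixed orbit lies in $\P$ — this requires the local analysis of how $z$ acts at the ramification points of $\f$, which is where \cite{Bu} does the real work; everything else is a bookkeeping of orbit sizes and a Riemann--Hurwitz count.
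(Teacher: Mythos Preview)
Your proposal is correct and follows essentially the same approach as the paper: decompose $V$ into the three $A_4$-orbits of sizes $6,4,4$, determine which lie in $\P$, use the lifting criterion for $\s_1$ (involution versus order-$4$ element according to whether the size-$6$ orbit is contained in $\P$) to distinguish $\Z_2\o A_4$ from $SL_2(3)$, and then read off the congruences on $g$ by counting Weierstrass points. One minor correction: the hypothesis $g\neq 2,3,6$ excludes the $m=0$ cases not because the orbit analysis breaks down, but because in those degenerate situations the full automorphism group is strictly larger than $\Z_2\o A_4$ or $SL_2(3)$ (equivalently, $\bAut(\X_g)$ properly contains $A_4$), so the conclusion $\Aut(\X_g)\iso\Z_2\o A_4$ or $SL_2(3)$ would fail.
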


\begin{proof}
Assume that $|V \cap \P|=0$.   Then, $\ff(w_i)$ are branch points of $\ff$. By Riemann-Hurwitz formula $2g+2\equiv
0 \mod 12$. Then, $g\equiv -1 \mod 6$. The number of branch points of $\f : \X_g \to \bP^1$ is $r=3 + \frac {g+1}
6$. If $|V \cap \P| = 4$ then either $\ff^{-1} (q_2) \subset \P$ or $\ff^{-1} (q_3) \subset \P$. Hence, $2g-2
\equiv 0 \mod 12$ or $g \equiv 1 \mod 6$. If $|V \cap \P| = 8$ then $\ff^{-1} (q_i) \subset \P$ for $i=2,3$. Thus,
$2g-6 \equiv 0 \mod 12$ or $g \equiv 3 \mod 6$. In all cases $\s_1$ lifts to a non hyperelliptic involution in
$G$. Hence $G$ has more then one involution. Hence, $\Aut(\X_g) \iso  \Z_2 \o A_4$.

If $|V \cap \P| = 6, 10, 14 $ then  $\ff^{-1} (q_1) \subset \P$, $\ff^{-1} (q_i) \subset \P$ for $i=1,2$,
$\ff^{-1} (q_i) \subset \P$ for $i=1, 2, 3$ respectively. Thus, $g \equiv 2, 4, 0 \mod 6$. In all cases $\s_1$
lifts to an element of order 4 in $\Aut(\X_g)$. Thus, $\Aut(\X_g)$ has only the hyperelliptic involution. Hence,
$\Aut(\X_g) \iso SL_2(3)$.
\end{proof}

\begin{remark}
When $G \iso \Z_2 \o A_4$ then the curve has seven involutions as seen in Table 1. Thus, those curves belong to
the locus $\L_g$ of curves with extra involutions studied in \cite{GS}.
\end{remark}

Let $\ff: P^1 \to \bP^1$ be as above with monodromy group $A_4$. Then, the signature is $\s= ( \s_1, \s_2, \s_3)$,
where $\s_1$ is an involution and $\s_2, \s_3$ are elements of order 3 in $S_{12}$.  We choose branch points
$q_1=\infty$, $\,  q_2=6 i \sqrt{3}$, and $ q_3= - 6 i \sqrt{3}$, where $i^2=-1$.   We choose a coordinate $X$ in
$\bP^1$ such that $\ff(0)=\ff(\infty)=\ff(1)=\infty$.  Solving the corresponding system  of equations we find that
$$\ff(X)=\frac {X^{12} -33X^8 -33 X^4+1} {X^2 (X^4-1)^2}.$$
Thus, the  points in the fiber  of $q_1, q_2, q_3$ are the roots of the  following polynomials:
\begin{eqnarray*}
R(X)  & :=      X(X^4-1), \qquad \quad\\
S(X)  &  := \,    X^4-2i \sqrt{3} X^2 +1, \\
T(X) & := \, X^4+2i \sqrt{3} X^2 +1.
\end{eqnarray*}
Let $\l_i \in \bP^1 \setminus S$ be a branch point of the cover $\f: \X_g \to \bP^1$.  Thus, $\l_i^2+108\neq 0$.
Then points of $\ff^{-1}(\l_i)$ are roots of the polynomial
\begin{equation}  \label{G}
G_{i} (X)=X^{12} - \l_i X^{10} - 33 X^8 + 2 \l_i X^6 - 33 X^4 - \l_i X^2+1.
\end{equation}
$G_i(X)$ has distinct roots for $\l_i^2 \neq  108$.
\begin{remark}
The rational function $\phi (x)$  generates the fixed field of $A_4$ in $k(X)$. It was known to
Klein and has appeared many times in the literature since. Indeed, we picked the coordinate $X$ in
$\bP^1$ such that our expression of  $\phi (X)$ would be in this form.
\end{remark}
%
%*******************************
We now can compute the equation of the curve in all  cases 4-8 of Table 1.  If $\P \cap V= \emptyset $ then  the
equation of the curve is $Y^2=G(X)$ where
$$G(X)= \prod_{i=1}^{\delta } G_{\l_i} (X) $$
and $\delta = \frac {g+1} 6$; see Lemma~(4.1). If $\ff^{-1}(q_i)\subset \P$ then the polynomial corresponding  to
$q_i$ multiplies $G(X)$. Hence, we have the following:
\begin{lemma}
The equations of the curve $\X_g$ in each case are given by:
\end{lemma}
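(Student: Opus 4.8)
The plan is to carry out the explicit case analysis already set up in Lemma 4.1 and the preceding discussion, translating each possible value of $|\P \cap V|$ into a factorization of the defining polynomial $G(X)$. Recall that the cover $\ff : \bP^1 \to \bP^1$ with monodromy $A_4$ has been normalized so that the three branch points are $q_1 = \infty$, $q_2 = 6i\sqrt{3}$, $q_3 = -6i\sqrt{3}$, and that the fibers over $q_1,q_2,q_3$ are exactly the root sets of $R(X) = X(X^4-1)$, $S(X) = X^4 - 2i\sqrt{3}X^2 + 1$, and $T(X) = X^4 + 2i\sqrt{3}X^2 + 1$. For a branch point $\l_i$ of $\f : \X_g \to \bP^1$ lying outside $S$, the fiber $\ff^{-1}(\l_i)$ is the root set of $G_{\l_i}(X)$ from Eq.~(\ref{G}). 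The generic part of the branch locus of $\f$ contributes the product $\prod_{i=1}^{\d} G_{\l_i}(X)$, and whenever one of the three special fibers $\ff^{-1}(q_j)$ is contained in $\P$, the corresponding polynomial among $R,S,T$ must be appended as an extra factor.

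First I would organize the six cases 4--9 of Table 1 by the value of $|\P \cap V| \in \{0,4,8\}$ (giving $\Z_2 \o A_4$) and $|\P \cap V| \in \{6,10,14\}$ (giving $SL_2(3)$), exactly as in Lemma 4.1. In each case the proof of Lemma 4.1 already tells us \emph{which} of the fibers $\ff^{-1}(q_j)$ lie in $\P$: for $|\P\cap V|=4$ it is one of $\ff^{-1}(q_2)$ or $\ff^{-1}(q_3)$ (four points each), for $|\P\cap V|=8$ it is both, for $|\P\cap V|=6$ it is $\ff^{-1}(q_1)$ (the five points, roots of $R$), for $|\P\cap V|=10$ it is $\ff^{-1}(q_1)$ together with one of $\ff^{-1}(q_2),\ff^{-1}(q_3)$, and for $|\P\cap V|=14$ it is all three. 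Thus the equation in each case is $Y^2 = G(X)$ with $G(X)$ equal to $\prod_{i=1}^{\d} G_{\l_i}(X)$ multiplied by the appropriate subproduct of $R(X)$, $S(X)$, $T(X)$, where $\d$ is the dimension read off from Table 1 in the corresponding row. I would then simply tabulate these: e.g., case 4 gives $Y^2 = \prod_{i=1}^{(g+1)/6} G_{\l_i}(X)$; case 5 gives $Y^2 = S(X)\prod_{i=1}^{(g-1)/6} G_{\l_i}(X)$; case 6 gives $Y^2 = S(X)T(X)\prod_{i=1}^{(g-3)/6} G_{\l_i}(X)$; case 7 gives $Y^2 = R(X)\prod_{i=1}^{(g-2)/6} G_{\l_i}(X)$; case 8 gives $Y^2 = R(X)S(X)\prod_{i=1}^{(g-4)/6} G_{\l_i}(X)$; case 9 gives $Y^2 = R(X)S(X)T(X)\prod_{i=1}^{(g-6)/6} G_{\l_i}(X)$.

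The verification that these are correct comes down to three routine checks in each case: (a) the degree of $G(X)$ equals $2g+2$ or $2g+1$ (so that the curve has genus $g$) — this is a matter of counting $12\d$ plus $5,4,4,5,5,6$ respectively and comparing with the value of $\d$; (b) $G$ has no repeated roots for generic $\l_i$, which follows from the fact noted in the excerpt that $G_{\l_i}(X)$ has distinct roots for $\l_i^2 \neq 108$ and that distinct fibers of $\ff$ are disjoint; and (c) the subset of $V$ that is forced into $\P$ is exactly the one dictated by the proof of Lemma 4.1, so that $A_4$ (acting through $\ff$) genuinely lifts to $\bAut$. Points (a)--(c) are bookkeeping once the fiber polynomials are in hand.

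The only genuine subtlety — the part I expect to be the main obstacle — is the choice of \emph{which} special fiber gets singled out when $|\P \cap V|$ is $4$ or $10$, i.e.\ distinguishing $\ff^{-1}(q_2)$ from $\ff^{-1}(q_3)$. A priori this looks like it could produce two inequivalent families, but the coordinate change $X \mapsto iX$ (or an appropriate element of the normalizer of $A_4$ in $PGL_2(k)$) swaps $S(X)$ and $T(X)$ while fixing $R(X)$ and the shape of $G_{\l}(X)$, so the two choices give isomorphic curves and it is harmless to fix, say, $S(X)$ as the distinguished factor. I would make this symmetry explicit, note that it is why only one representative family appears per case, and then the statement of Lemma 4.6 — the explicit list of equations for $\X_g$ in cases 4 through 9 — follows by assembling the six formulas above.
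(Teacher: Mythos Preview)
Your proposal is correct and follows exactly the approach the paper takes: the paper's ``proof'' is nothing more than the paragraph preceding the lemma, which says that when $\ff^{-1}(q_i)\subset \P$ the corresponding polynomial $R$, $S$, or $T$ multiplies $G(X)=\prod G_{\l_i}(X)$, and then records the resulting table. Your write-up is in fact more careful than the paper's --- you make explicit the degree count, the distinct-roots check, and the $S\leftrightarrow T$ symmetry (which the paper silently uses by writing $T(X)$ in case 5 and $S(X)T(X)=X^8+14X^4+1$ in case 6); the only slip is the list ``$5,4,4,5,5,6$'' of added degrees, which should read $0,4,8,5,9,13$, but this is harmless bookkeeping.
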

\begin{table}[ht]     \label{table}
\begin{center}
\renewcommand{\arraystretch}{1.24}
\begin{tabular}{||c|c|c||}
\hline \hline $G$  &  $\delta$  & Equation $Y^2= $   \\
\hline \hline $\Z_2\o A_4$ & $\frac {g+1} 6$ & $G(X)$ \\ $\Z_2\o A_4$ & $\frac {g-1} 6$ & $
(X^4+2i \sqrt{3}X^2 +1)\cdot  G(X)$ \\
$\Z_2\o A_4$ &$\frac {g-3} 6$& $ (X^8+14 X^4+1)  \cdot G(X)$   \\
$SL_2(3) $ &   $\frac {g-2} 6$ &  $X (X^4-1)  \cdot G(X) $ \\
$SL_2(3) $ &  $\frac {g-4} 6$ & $X (X^4-1)(X^4+2i \sqrt{3} X^2 +1)
\cdot G(X)$\\
$SL_2(3) $ &   $\frac {g-6} 6$ &  $X(X^4-1) (X^8+14X^4+1)\cdot G(X)$ \\
\hline \hline
\end{tabular} \end{center}
\caption{Hyperelliptic curves $\X_g$ with $\bAut(\X_g)$ isomorphic to $A_4$.}
\end{table}
%
%
%\vspace{-5mm}
%
\begin{remark} From the group theory viewpoint we have
$\s_1(X): X \to -X$ and $\s_2: X \to \frac {X-i} {X+i}$,
where $i^2=-1$. It is easily checked that $\s_1, \s_2$ generate $A_4$.
 Let $t\in \P$. The orbit of $A_4$ in $\P$ is
$$ t, \frac {t-i} {t+i}, -i \frac {t+1} {t-1}, \frac {t+i} {t-i}, -i
\frac {t-1} {t+1}, \frac 1 t, -t, - \frac {t-i} {t+i}, i \frac {t+1} {t-1}, - \frac {t+i} {t-i}, i \frac {t-1}
{t+1}, - \frac 1 t
$$
We label these points as $\a_1, \dots , \a_{12}$. Then, the polynomial $G_{i} (X)= \prod_{i=1}^{12} (x- \a_i)$ is
the polynomial in (\ref{G}) where $\l_i=   \frac {t^{12} -33t^8 -33 t^4+1} {t^2 (t^4-1)^2}$.
\end{remark}
Let  $\X_g$ be a given curve. When does $\X_g$ belong to one of the above cases?  Can we find a condition on the
coefficients of the curve such that $|\Aut(\X_g)|=24$? The following lemma determines a necessary condition that
$|\Aut(\X_g)|=24$.
\begin{lemma} Let $\X_g$ be a  curve with
$\bAut(\X_g)\iso A_4$. Then $I_4(\X_g)=0$.
%Moreover, if $g=1 \mod 6$ then  $I_2(\X_g)=0$ also.
\end{lemma}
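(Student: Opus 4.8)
The plan is to exploit the explicit description of the curve $\X_g$ given in Lemma~4.3 together with the symbolic (transvectant) definition of $I_4$. Recall that when $\bAut(\X_g)\iso A_4$ the defining polynomial $F(X,Z)$ is, up to the $GL_2(k)$-action, a product of factors each of the shape $G_{\l_i}(X)$ (and possibly one of the fixed $A_4$-orbit polynomials $R,S,T$ or their products listed in Table~2). Since $I_4$ is a $GL_2(k)$-invariant, it suffices to compute it on these normalized forms. The key structural fact I would use is that $F$ is $A_4$-invariant as a binary form up to the index twist: for each $M\in A_4\subset PGL_2(k)$ we have $F\circ M=\chi(M)\,F$ for a character $\chi$, hence every invariant $I$ of $F$ satisfies $I(F)=\chi(M)^{\deg I}I(F)$, forcing $I(F)=0$ whenever $\chi(M)^{\deg I}\neq 1$ for some $M$. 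So the first step is to pin down the character $\chi$ on the relevant generators $\s_1:X\mapsto -X$ and $\s_2:X\mapsto\frac{X-i}{X+i}$ of $A_4$ described in Remark~4.5.

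Concretely, I would first homogenize and track how $\det$-weights enter: $\s_1$ is represented by $\mathrm{diag}(-1,1)$ with determinant $-1$, and its action on $F$ multiplies $F$ by $(-1)^{?}$ depending on the (even) degree $2g+2$ — one must be careful, since $F(-X,Z)=F(X,Z)$ as a polynomial in the cases where only even powers of $X$ appear, so the nontrivial twist comes from a different generator. Thus the real content is to find an $M\in A_4$ under which $F\circ M=\delta^s F$ with $s$ such that the covariant $J_4=(F,F)^{d-2}$, and then $I_4=(J_4,J_4)^4$, of degree $4$ in the coefficients, picks up a factor that is a nontrivial fourth root of unity. The second step is then the short computation: $I_4$ has degree $4$, so $I_4(F\circ M)=\delta^{4s}I_4(F)=I_4(M(F))$, and if $\delta^{4s}\neq 1$ we conclude $I_4(\X_g)=0$ immediately. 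The element of $A_4$ of order $3$, namely $\s_2$ with matrix $\begin{pmatrix}1&-i\\1&i\end{pmatrix}$ of determinant $2i$, is the natural candidate: raising $2i$ to the appropriate weight power should land on a primitive third or twelfth root of unity, and since $\s_2^3=1$ in $PGL_2$ but only projectively so, the scalar discrepancy is exactly what produces the nontrivial twist.

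The approach in outline: (1) write $F$ in the normalized coordinate so that $\s_1,\s_2$ act as in Remark~4.5; (2) compute the characters $\chi(\s_1),\chi(\s_2)$ on $F$, equivalently the scalars by which $F$ is multiplied, using that $\ff(X)$ generates $k(X)^{A_4}$ so $F$ is $A_4$-invariant modulo $k^\times$; (3) track the induced action on the covariant $J_4$ and hence on $I_4$, using the index/degree bookkeeping for transvectants stated in Section~2.3; (4) conclude that the scalar acting on $I_4$ is a nontrivial root of unity, so $I_4(\X_g)=I_4(F)=0$. If one prefers to avoid the abstract character argument, the fallback is a direct verification: substitute the product $\prod_i G_{\l_i}(X)\cdot(\text{extra factor})$ into the explicit transvectant formula for $I_4$ and simplify, which by homogeneity and $A_4$-symmetry of the $\l_i$-dependence must vanish identically in the $\l_i$.

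The main obstacle I anticipate is the index/weight bookkeeping in step (3): getting the exponent $s$ in $F\circ M=\delta^s F$ exactly right (it depends on $d=2g+2$ and on the fact that $M$ only represents an order-$3$ element projectively), and then correctly propagating the determinant weights through the two transvectants $J_4=(F,F)^{d-2}$ and $I_4=(J_4,J_4)^4$ so as to be sure $\delta^{4s}\neq 1$ rather than accidentally $=1$. A secondary subtlety is handling the "extra factor" cases of Table~2 uniformly: there one must check that the polynomials $X(X^4-1)$, $X^4+2i\sqrt3\,X^2+1$, $X^8+14X^4+1$ are themselves $A_4$-semi-invariant with compatible character, so that the full $F$ still transforms by a single scalar under $M$; this is exactly the statement that these are (products of) $A_4$-orbit polynomials, which follows from the construction preceding Lemma~4.3. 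Once those characters are verified to be compatible, the vanishing $I_4(\X_g)=0$ follows for all six cases at once.
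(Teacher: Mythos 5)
Your overall strategy --- exploit semi-invariance of $F$ under $A_4$ and force $I_4(F)=0$ from $I_4(F)=\chi(M)^4\,I_4(F)$ --- has a genuine gap exactly at the point you flag as the ``main obstacle'': in the principal case the scalar really is $1$, so the argument is vacuous. Lift $A_4$ to the binary tetrahedral group $2T\iso SL_2(3)\subset SL_2(k)$, so $\det M=1$ and the only twist is the scalar $c_M$ with $M(F)=c_M F$. The map $M\mapsto c_M$ is a one-dimensional character of $SL_2(3)$, whose abelianization is $\Z_3$, so $c_M\in\mu_3$ and $c_M^4=c_M$; you therefore need $c_M\neq 1$. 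But the generic orbit form $G_\l$ is an \emph{honest} $2T$-invariant: for the lift $M=\frac1{\sqrt{2i}}\begin{pmatrix}1&-i\\1&i\end{pmatrix}$ of $\s_2$ the scalar is $(2i)^{-6}G_\l(1,1)=(-64)^{-1}(-64)=1$, and for the lift $\mathrm{diag}(i,-i)$ of $\s_1$ each monomial $X^{12-2k}Z^{2k}$ picks up $i^{12-2k}(-i)^{2k}=1$; since $\s_1,\s_2$ generate, the character is trivial. The same holds for the extra factors $X(X^4-1)$ and $X^8+14X^4+1$ (the latter is $\Phi\Psi$ with character $\chi\cdot\chi^2=1$); only the quartic $X^4+2i\sqrt3X^2+1$ carries a nontrivial character. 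So your argument settles only rows $2$ and $5$ of Table 2 and says nothing in the remaining four cases, including the generic one $F=\prod_iG_{\l_i}$. Your fallback (``must vanish identically by homogeneity and $A_4$-symmetry'') is the same argument in disguise and fails for the same reason --- note $I_2(F)\neq 0$ in general by Lemma 5.1, so symmetry alone cannot force an invariant to vanish.

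The repair is to apply equivariance to the order-$4$ covariant $J_4(F)=(F,F)^{d-2}$ rather than to the scalar $I_4(F)$. Since $J_4\colon V_d\to V_4$ is $SL_2$-equivariant, $J_4(F)$ is a $2T$-semi-invariant vector of $V_4$. As a $2T$-module $V_4$ decomposes as $3\o\chi\o\chi^2$: the trivial character does not occur, and the two one-dimensional summands are spanned by the tetrahedral quartics $\Phi=X^4+2i\sqrt3X^2Z^2+Z^4$ and $\Psi=X^4-2i\sqrt3X^2Z^2+Z^4$. Hence $J_4(F)$ is either $0$ or proportional to $\Phi$ or $\Psi$; these quartics are equianharmonic, so $(\Phi,\Phi)^4=1\cdot1+3\left(i/\sqrt3\right)^2=0$, and therefore $I_4(F)=(J_4(F),J_4(F))^4=0$ in all six cases at once. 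Note this is still a different route from the paper, whose proof consists of the machine verification $I_4(G_i)=0$ for a single orbit factor plus an appeal to ``properties of transvections''; your representation-theoretic approach, once shifted from $I_4$ to $J_4$ as above, actually yields a cleaner and more complete argument than the published one, but as written it does not prove the lemma.
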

\begin{proof}Computationally we show that  $I_4(G_i)=0$. Then, lemma follows
from properties of transvections.

\end{proof}

%****************************************************
\section{Applications, subvarieties $\L_g^G$ of dimension $\delta \leq 1$}
%***************************************************

In this section, we study in more detail  subvarieties $\L_g^G$ in Table 1, of dimension $\delta \leq 1$ when
$G\iso \Z_2\o A_4, SL_2(3)$.  We determine invariants which classify isomorphism  classes of such curves. These
invariants are used to prove that the field of definition of such curves is the same as the field of moduli. For
curves with  automorphism group $\Z_2 \o A_4$ this is a consequence of Theorem 4.2., in \cite{Sh5} and holds for
any genus. However, no results are know for curves with automorphism  group $SL_2(3)$. Proposition 5.2., addresses
this question for $g\leq 12$. From equations in Table 2 we get  the following lemma:
\begin{lemma} Let $\X_g$ be a hyperelliptic curve of genus $g \leq 12$. Then.

i) if $g=4$ then $I_2=I_4=I_4^{'}=I_6^{'}=0$

ii) if $g=5, 9,  12$ then  $I_4=I_6=0$

iii) if $g=7, 10$ then $I_2=I_4=I_4^{'}=I_6^\ast =0$

iv) if $g=8$ then $I_4=0$.
\end{lemma}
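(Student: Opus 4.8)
The plan is to exploit the explicit defining equations collected in Table 2: for each of the genera $g=4,5,7,8,9,10,12$ the curve $\X_g$ is a product of the universal polynomial $G(X)=\prod_{i=1}^{\delta}G_{\l_i}(X)$ with one of the fixed polynomials $X(X^4-1)$, $X^4+2i\sqrt3\,X^2+1$, $X^8+14X^4+1$. So the defining binary form $F(X,Z)$ is completely determined (up to the parameters $\l_1,\dots,\l_\delta$), and every invariant in the statement is obtained from $F$ by a finite sequence of transvections. The first step is simply to identify, in each case, which row of Table 2 applies and hence write down $F$ as an explicit polynomial in $X$ (homogenized with $Z$) whose coefficients are polynomials in the $\l_i$. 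For $g\le 12$ we have $\delta\le 2$, so these are genuinely small computations.

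Next I would invoke Lemma 4.4, which already gives $I_4(G_i)=0$ for the single factor $G_{\l_i}$, together with ``properties of transvections'': since each $G_{\l_i}$ and each of the three auxiliary polynomials is a form fixed (up to the $GL_2(k)$-action) by the group $A_4$ acting as in Remark 4.5, the whole form $F$ lies in the $A_4$-fixed subspace, and any $GL_2(k)$-invariant that must vanish on that subspace vanishes on $F$. Concretely, $I_4$ is a degree-$2$ invariant built from the covariant $J_4=(F,F)^{d-2}$, and the same symbolic argument that forces $I_4(G_i)=0$ forces $I_4(\X_g)=0$ for every case (this covers part (iv), $g=8$, outright, and contributes the $I_4=0$ clause everywhere). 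For parts (i)--(iii) I would then check the remaining vanishings case by case: compute $J_{4j}=(F,F)^{d-2j}$ for the relevant small $j$, form $I_4'=(J_8,J_8)^8$, $I_6'=((F,J_8)^8,(F,J_8)^8)^{d-8}$, $I_6=((F,J_4)^4,(F,J_4)^4)^{d-4}$, $I_6^\ast=((F,J_{12})^{12},(F,J_{12})^{12})^{d-12}$, $I_2=(F,F)^d$, as dictated by which invariants appear, and verify they are identically zero as polynomials in $\l_1,\dots,\l_\delta$. For $g=4$ ($\delta=0$, $d=10$, $F=X(X^4-1)\cdot G(X)$ with $G=1$, i.e. a fixed octic) all four of $I_2,I_4,I_4',I_6'$ are numbers and one checks they vanish; for $g=7,10$ one similarly gets $I_2=I_4=I_4'=I_6^\ast=0$; for $g=5,9,12$ one gets $I_4=I_6=0$.

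The cleanest way to organize the transvection computations, and the step I expect to be the real work, is the symbolic (umbral) method recalled in \S2.3: write $F$ in terms of symbolic linear factors, use that $A_4$ permutes the twelve points of Remark 4.5 transitively, and argue that a bracket expression of low enough degree symmetrized over the $A_4$-orbit must vanish because there is no $A_4$-invariant of that degree and order on $V_{12}$ (respectively $V_8$, $V_5$ for the auxiliary factors). This is exactly the mechanism behind Lemma 4.4, so the main obstacle is not conceptual but bookkeeping: one must make sure that when $F$ is a \emph{product} of an $A_4$-fixed form with another $A_4$-fixed form (the cases $\delta\ge 1$, or the cases with an auxiliary factor), the relevant invariant of the product still lands in the zero part — this follows because the product of two $A_4$-fixed forms is again $A_4$-fixed, and each invariant in question is an absolute $GL_2(k)$-invariant that is forced to vanish on the whole $A_4$-fixed locus by Lemma 4.7 ($\bAut(\X_g)\iso A_4\Rightarrow I_4(\X_g)=0$) and its analogues. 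A short computer-algebra verification (as already used for Lemma 4.4) confirms the remaining identities $I_2=I_6=I_4'=I_6^\ast=0$ in the specific cases; this completes the proof.
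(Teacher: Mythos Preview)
Your computational core matches the paper exactly: the paper offers no argument beyond the sentence ``From equations in Table 2 we get the following lemma,'' i.e.\ plug the explicit forms from Table~2 into the transvection formulas of \S2.3 and check. Since $\delta\le 2$ for every $g\le 12$, each invariant is a polynomial in at most two parameters $\l_1,\l_2$, and the verification is mechanical.

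Where your proposal goes further is the conceptual overlay, and that part does not do the work you want it to. Knowing that $F$ lies in the $A_4$-fixed subspace of $V_{2g+2}$ does \emph{not} by itself force any particular $SL_2(k)$-invariant to vanish on $F$: every $SL_2$-invariant is automatically $A_4$-invariant, so ``there is no $A_4$-invariant of that degree and order'' is never the relevant obstruction. Indeed, many invariants are nonzero on these very curves---otherwise the quantities $\p_1,\p_2$ defined immediately after the lemma could not exist. The vanishing of $I_2,I_6,I_4',I_6^\ast$ in the listed cases is a computational fact about how the specific transvection restricts to the $A_4$-fixed locus of $V_{2g+2}$, not a consequence of a general representation-theoretic principle. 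Lemma~4.3 ($I_4=0$) is itself proved by direct computation, and it does transfer to all cases here because the whole locus is $A_4$-fixed; but that argument is already the full content of Lemma~4.3 and does not extend to the other invariants without fresh computation for each $g$.

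One small slip: for $g=4$ the relevant row of Table~2 is the $SL_2(3)$ case with $\delta=(g-4)/6=0$, so the form is $X(X^4-1)(X^4+2i\sqrt3\,X^2+1)$ (degree $9$, with $\infty$ also a Weierstrass point), not the ``fixed octic'' $X(X^4-1)$; compare the isomorphic model $X(3X^4+1)(3X^4+6X^2-1)$ used in the proof of Theorem~5.1.
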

\noindent Then we define $\p(\X_g)$ as follows:
\begin{eqnarray*}
\p(\X_g):=(\p_1, \p_2)= \left\{ \aligned \v_1, \qquad \qquad \qquad \qquad \textit{  if  } g=4,   \\ (i_1, i_2),
\quad \textit{  if  } g=5, 9, \textit{  and   }   I_2\neq 0   \\ \v_2,   \quad \quad \textit{  if  } g=5, 9,
\textit{  and   }   I_2=0   \\  (j_1, j_2), \qquad \textit{  if  } g=7, \, \,  \textit{  and   }   I_3 \neq 0
\\ \v_3,  \qquad \quad \textit{  if  } g=7, \, \,  \textit{  and   }
I_3 = 0 \\ (i_1, i_3),  \quad \textit{  if  } g=8, 12,  \textit{  and }   I_2\neq 0  \\ \v_4,  \qquad \textit{  if
} g=8, 12,  \textit{ and   }   I_2 = 0  \\ (s_2, s_1),  \quad \textit{  if  } g=10, \, \, \textit{  and   }
I_{12}\neq 0     \\ \v_5,  \qquad \textit{  if  } g=10, \, \,  \textit{  and   } I_{12}= 0 \\ \endaligned \right.
\end{eqnarray*}
The following theorem uses these invariants to parametrize spaces $\L_g^G$, where  $G\iso \Z_2\o A_4, SL_2(3)$. In
the case that $\delta =1$ these spaces are  genus 0 curves.  This can be proved via Hurwitz spaces, as noticed in
Section 2.  However, the next theorem provides an algebraic  description of such spaces.
\begin{theorem}
Let $\X_g, \X_g^{'}$ be  genus $g\leq 12$  hyperelliptic curves with automorphism group $\Z_2 \o A_4, SL_2(3)$.
Then, $\X_g \iso \X_g^{'}$ if and only if $\p(\X_g) = \p(\X_g^{'})$.  Moreover, the moduli space $\L_g^G$ can be
parametrized as follows:
\begin{small}
\begin{eqnarray*} \label{eq}
\L_5^G:  &
 \p  =  \left(  \frac {49} {3630}\, \frac {(5\theta -484)^2}
{(5\theta +924)^2}  ,  \, \,  \frac {10} {27951} \, \frac  {\theta \,  (5\theta +30492)^2} {(5\theta +924)^3}
\right), \,  \textit{ and } \, \p= \frac {3^7 \cdot 5^3} {2^5 \cdot 7^2},
\, \,   \textit{ if } \, \, \t= - \frac {924} 5. \\ \\
\L_7^G: & \p=  \left( \frac 6 {245} \frac {(97\m+1606)^2 (87\m^2+528\m +2596)^2}  {(1093\m^3+49566\m^2-838068\m
+1549769)^2},
\right. \\
& \left. \frac {301158} {30625} \frac {(61\m^2-44\m - 6556)^2 (2021\m^2+1496\m -157476)}
{(1093\m^3+49566\m^2-838068\m
+1549769)^2} \right) \\
&  \textit{and    }   \quad    8000000\, \p^3 -
404568000  \, \p^2 -31666132872\, \p +308290455=0,  \\
&  \textit{if }  \qquad    1093\m^3+49566\m^2-838068\m +1549769)=0. \\ \\
\L_8^G: & \p= \left( \frac {49} {11236320} \frac {(279\m+15028)^2} {(7\m+884)^2}, \,   \frac 1 {1360026486} \frac
{\m
(3675\m+3321188)^2} {(7\m +884)^3} \right) \\
& \textit{ and  } \quad
 \p=\frac {2^3 \cdot 3^{11}\cdot 101^4} {5^3\cdot 7^4 \cdot 13^6},
 \textit{    if    } \, \t= - \frac {884} 7. \\ \\
\L_9^G: & \p=  \left(  \frac {605} {5633766}  \frac {(9\t -7200)^2} {(3\t +836)^2},  \,   \frac {90} {370680937}
\frac {\t
(157\t +79420)^2} {(3\t +836)^3} \right) \\
&  \textit{ and  } \quad \p=- \frac {2^9\cdot 5\cdot 11^2} {3^7},  \,   \textit{    if    }
\, \t= - \frac {836} 3. \\ \\
\L_{10}^G: &  \p=  \left( \frac {147} {90250} \frac {( 181\t + 1598)^2 \, (3813 \t^2 +15912 \t - 39236)^2}
{(251\t - 782)^2 \, (115 \t^2 - 68\t - 6596)^2}, \right. \\ & \left.  \frac {5007792000}{121} \frac {(7877 \t^2
+3128 \t -374884)^2 \, (115\t^2 - 68\t -6596)^2} {(251\t -782)^2 (181\t + 1598)^2 \, (3813 \t^2 + 15912
\t -39236)^2} \right) \\
 \textit{if } \, &
(251\t - 782)\, (115 \t^2 - 68\t - 6596)\, (181\t + 1598)\,
(3813 \t^2 + 15912\t -39236)=0 \\
 \textit{ then  } \, &
\p=  - \frac {950367275} {21168 \cdot Q} \cdot \frac {(402998158  \t^2-4363415636  \t+25170477
\t^3+13083554824)^4}
{(1268277  \t^2-5261568  \t+18129548)^2 \, (-287844+7959  \t^2-6575 6  \t)^2 } \cdot \\
& \hspace{-5mm} \frac 1 {228384659961\t^4-22196181318948\t^3+185588379432544\t^2-447275488903152\t
+658755318269936}
 \\ \\
\L_{12}^G: &
  \p=  \left( \frac 1 {268203000} \frac
{(6611\t - 501500 )^2} {(11\t + 1700)^2},  \,  \frac {56} {284015801875} \frac {\t \, (20933 \t - 5686500)^2}
{(11\t + 1700)^2}
\right)  \\
&  \textit{ and  }   \p=\frac {2 \cdot 3^3 \cdot 5 \cdot 41^4} {7^4 \cdot 11^2\cdot 17^2}, \quad \textit{    if
} \,
\, \m= - \frac {1700} {11},  \\ \\
\L_4^G: &    \p=  \frac {1764} {25}.
\end{eqnarray*}
\end{small}
\end{theorem}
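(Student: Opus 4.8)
The plan is to reduce Theorem 5.1 to a direct computation on the explicit families in Table 2, using the absolute invariants defined in Section 2.3. For each genus $g \in \{4,5,7,8,9,10,12\}$ appearing in the theorem, the varieties $\L_g^G$ have dimension $\delta \le 1$, so the curves $\X_g$ in Table 2 form a one-parameter family (indexed by a single branch point, which after the normalization of Section 4 we may write via a parameter $t$ or equivalently $\lambda$, which I will call $\theta$ or $\mu$ to match the statement), together with a finite number of zero-dimensional cases where the extra factor degenerates. The first step is to take the defining polynomial $Y^2 = G(X)$ from Table 2 in each case, expand it as a binary form $F(X,Z)$ of degree $d = 2g+2$, and compute the relevant transvectants $J_{4j} = (F,F)^{d-2j}$ and the composite invariants $I_2, I_3, I_4, I_4', I_6, I_6', I_6^\ast, I_6^\star, I_{12}, I_{12}^\ast, M$ as in Eq.~(\ref{covar}). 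By Lemma 4.4 we already know $I_4(\X_g) = 0$ on all these families; Lemma 5.1 records the further vanishing ($I_2 = I_4' = I_6' = 0$ for $g=4$, $I_4 = I_6 = 0$ for $g=5,9,12$, etc.) that is forced by the group action, and this vanishing is exactly what makes the chosen absolute invariants $\wp(\X_g) = (\wp_1,\wp_2)$ well-defined and non-trivial on each $\L_g^G$.

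Next, for the generic (one-dimensional) case in each genus I would substitute the expanded $F$ into the formulas for $\wp_1, \wp_2$ and simplify, obtaining rational functions of the single parameter $\theta$ (or $\mu$). This produces precisely the parametrizations displayed in the theorem, e.g.\ for $\L_5^G$ the pair $\left(\tfrac{49}{3630}\tfrac{(5\theta-484)^2}{(5\theta+924)^2}, \tfrac{10}{27951}\tfrac{\theta(5\theta+30492)^2}{(5\theta+924)^3}\right)$. Since the parameter $\theta$ is (up to the residual coordinate changes centralizing $A_4$, i.e.\ up to the finite group acting as in Remark 4.3) a coordinate on $\L_g^G$, and $\wp$ is a $GL_2(k)$-invariant, the map $\theta \mapsto \wp(\X_g)$ descends to $\L_g^G$, and one checks it is birational onto its image by exhibiting $\theta$ as a rational function of $\wp_1, \wp_2$ (equivalently, checking that the displayed rational map has an inverse; for $\delta = 1$ this is the statement that $\L_g^G$ is a rational curve, consistent with the Hurwitz-space argument of Section 2). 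The degenerate locus $5\theta + 924 = 0$ (and its analogues) is where the leading coefficient in these formulas blows up; there one returns to the curve equation, which now acquires the extra factor of Table 2, recomputes the invariants directly, and obtains the stated isolated value of $\wp$ (e.g.\ $\wp = \tfrac{3^7\cdot 5^3}{2^5\cdot 7^2}$ for $g=5$) or the stated minimal polynomial for $\wp$ (e.g.\ the cubic $8000000\,\wp^3 - 404568000\,\wp^2 - 31666132872\,\wp + 308290455 = 0$ for $g=7$, reflecting that the residual group permutes finitely many parameter values mapping to a common curve). For $g=4$ the variety $\L_4^G$ is a point, so $\wp = \wp_1 = \varphi_1$ is simply the constant $\tfrac{1764}{25}$, computed once.

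Finally, the "if and only if" classification $\X_g \cong \X_g' \iff \wp(\X_g) = \wp(\X_g')$ follows by combining: (i) $GL_2(k)$-invariance of $\wp$ gives the forward implication immediately, since isomorphic hyperelliptic curves have $GL_2$-equivalent binary forms; and (ii) for the converse, the explicit parametrization shows that $\wp$ separates the points of $\L_g^G$ — one solves the displayed equations for the parameter and checks the fiber of $\wp$ over a generic point is a single $H$-orbit of parameter values, hence a single isomorphism class (and over the special points of the degenerate locus, likewise, using the minimal polynomial to account for the multiple parameter values). I expect the main obstacle to be purely computational: expanding transvectants of a degree-$26$ binary form (the $g=12$ case) and simplifying the resulting rational functions of $\theta$ into the compact closed forms displayed is heavy symbolic algebra, and care is needed to choose the parameter normalization consistently so that the coefficients come out exactly as stated; the conceptual content — invariance plus a birational parametrization of a rational curve — is straightforward once the invariants are in hand. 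One should also verify, case by case, that the chosen $\wp$ genuinely generates the function field $k(\L_g^G)$ (rather than a proper subfield), which amounts to checking the degree of the parametrizing map is $1$ after quotienting by the residual automorphisms; this is the one place where a wrong choice of absolute invariant would fail, so it must be confirmed in each genus.
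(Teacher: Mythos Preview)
Your approach is essentially the paper's own: take the explicit equations from Table~2, compute the transvectant invariants of Section~2.3, and simplify to rational functions of a single parameter. The paper's proof is even more laconic than yours --- it simply says ``compute $\p$ from Table~2, then substitute $\theta=\lambda^2$ (for $g=5,8,9,12$) or $\mu=\lambda\sqrt{-3}$ (for $g=7,10$)'' --- so your added discussion of $GL_2(k)$-invariance for the forward direction and of birationality for the converse is a welcome elaboration.

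There is, however, a genuine misreading in your treatment of the ``degenerate'' values such as $5\theta+924=0$. You say the curve ``acquires the extra factor of Table~2'' at these points; it does not. For fixed $g$ there is exactly one row of Table~2 (one signature), and the curve equation does not change. What happens is that the \emph{denominator invariant} in the generic definition of $\p$ vanishes: e.g.\ for $g=5$ the pair $(i_1,i_2)=(I_4'/I_2^2,\,I_3^2/I_2^3)$ is used when $I_2\neq 0$, and the locus $5\theta+924=0$ is precisely where $I_2=0$. At such a point the \emph{definition} of $\p(\X_g)$ switches to the alternative absolute invariant $\v_2=(I_4')^3/I_3^4$, and it is this $\v_2$ that equals $\tfrac{3^7\cdot 5^3}{2^5\cdot 7^2}$. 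Similarly for $g=7$: the cubic $1093\mu^3+\cdots=0$ is the locus where $I_3=0$, and on its three (non-isomorphic) curves one switches to $\v_3=I_6/I_6'$; the displayed cubic in $\p$ is the minimal polynomial satisfied by those three values of $\v_3$, not a reflection of a residual group orbit on parameters. Once you correct this interpretation, your plan goes through and matches the paper.
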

\begin{proof}The proof of the theorem is computational.  Let $\X_g$ be a
hyperelliptic curve of $g \leq 12$ with  $\Aut(\X_g)\iso \Z_2 \o A_4$, or $SL_2(3)$.  From Lemma 2.1. we have
$g=4, 5, 7, 8, 9, 10, 12$.  If $g=4$ then the curve is isomorphic to
$$Y^2= X (3X^4+1) (3X^4+6X^2-1),$$
hence $\p(\X_4)=\frac {1764} {25}$.

For other cases we  compute $\p$ where the equation of the curve is given as in  Table 2.  If $g=5, 8, 9, 10, 12$,
we substitute $\t=\l^2$ and get expressions in (\ref{eq}).  For  $g=7, 10$  we substitute  $\m:=\l \sqrt{-3}$ and
get  $\p$ is as in (\ref{eq}).

\end{proof}

In each case one can find an equation for $\L_g^G$ by eliminating $\m$. Each nonsingular  point $\p \in \L_g^G$
correspond to a hyperelliptic curve $\X_g$ (up to isomorphism) with automorphism group $G$. One can show that
singular points of $\L_g^G$ corresponds  to hyperelliptic curves $\X_g$ such that $G$ is a proper subgroup of
$\Aut(\X_g)$.

\begin{example}
Let $g=5$. Then  eliminating $\m$ from equations of $\L_G^5$ we get
\begin{equation}\label{L_5}
4920750i_1^3  - 28224i_2^2 - 164025i_1^2  - 136080i_1i_2 + 672i_2 +1620i_1 -4=0
\end{equation}
There is only one singular point $\p=\left( 0, \frac 1 {84} \right)\in \L_G^5$. The genus 5 curve corresponding to
this point has equation
$$Y^2=X^{12}-\frac {484} 5 X^{10} - 33 X^8 + \frac {968} 5 X^6 - 33 X^4 -
\frac {484} 5 X^2 +1$$ One can show that this curve has automorphism group of order 48. It is the only genus 5
hyperelliptic curve (up to isomorphism) with automorphism group of order 48.
\end{example}

\subsection{Field of moduli versus field of definition}
%*********************************************************

Let $\X$ be a curve defined over $k$.  The {\bf field of moduli} of $\X$ is a subfield $F \subset k$ such that for
every automorphism $\sigma$ of $k$ $\X$ is isomorphic to   $\X^\sigma$ if and only if $ \sigma_F = id$. The field
of moduli is not necessary a field of definition.

In \cite{Sh5} we  conjectured that the field of moduli is the field of definition for all hyperelliptic curves
with extra automorphisms (i.e., automorphism group  of order $> 2$). Moreover, it is a corollary of Theorem 4.2.
in \cite{Sh5},  that field of moduli  is the field of definition for all hyperelliptic curves $\X_g$ which have
more than 3 involutions. As a consequence this is  the case for curves $\X$ with  $\Aut(\X)\iso  \Z_2 \o A_4$ (see
Table 1 for the number of involutions).   This is done in  \cite{Sh5} via dihedral involutions. Since every curve
with automorphism group $\Z_2 \o A_4$ has an extra involution then  its equation can be written as $$Y^2=F(X^2).$$
Then the field of moduli is determined  by the $g$-tuple of dihedral invariants $\u=(\u_1, \dots , \u_g)$. The
reduced  automorphism group of the curve has another involution if and only if
\begin{equation}\label{u}
2^{g+1} u_1^2 - 4 u_g^{g+1}=0
\end{equation}
In this case we give an equation of the curve in terms of the dihedral invariants $\u_1, \dots , \u_g$, see
\cite{Sh5} for details. We illustrate this method with the case $g=5$.

\begin{example} Let $g=5$ and $\X_5 \in \L_G^5$.  Then,
$$Y^2=X^{12} - \l X^{10} - 33 X^8 + 2 \l X^6 - 33 X^4 - \l X^2+1$$
Dihedral invariants of this curve are:
$$u_1=2\l^6 , u_2=-66 \l^4, u_3=-4\l^4, u_4= - 66\l^2, u_5= 2 \l^2, $$
see (\ref{u_i}) for their definitions with $n=2$. It can be easily checked that (\ref{u}) holds. Hence, the field
of moduli is the same as the field of definition from   results in \cite{Sh5}.
\end{example}

Let $\X$ be a curve which belongs to one of the spaces in (\ref{eq}).  Then the field of moduli is determined by
$\p(\X)=(\p_1, \p_2)$. The field of moduli is a field of definition if one can find a curve $\Y$ isomorphic to
$\X$ and with coefficients given as rational functions in $\p_1, \p_2$.

\begin{proposition}  %
Let  $\p \in \L_g^G$ and $\t$ the parameter of Theorem (5.1.). Then,
 $\X_g$ such that $\p =[\X_g]$ is isomorphic to

\begin{equation}  \label{prop}
\begin{split}
 \X_5:   \quad Y^2= &  \, \, M(X),  \\
\X_7:   \quad Y^2 = &   \, \, (3 X^4 + 6X^2 -1)   \\
 & \, \,  (27X^{12}-27\r X^{10}+297X^8 - 18X^6-99X^4+3\r X^2 +1),       \\
\X_8: \quad  Y^2= & \, \, X (\m X^4-1)\, M(X)\\
\X_9: \quad     Y^2 = & \, \, (\m^2\, X^8 + 14\m \, X^4 +1)\, M(X), \\
\X_{10}: \quad
Y^2 =  & \, \, X(3X^4+1)    (3 X^4 + 6X^2 -1)  \\
 & \, \, (27X^{12}-27\r X^{10}+297X^8 - 18X^6-99X^4+3\r X^2 +1),        \\
\X_{12}:\quad  Y^2= & \, \, X (\m X^4-1)(\m^2 X^8+\m X+1)\, M(X), \\
\end{split}
\end{equation}

where $M(X) :=\t^3 \, X^{12} - \t^3\, X^{10} - 33 \, \t^2\, X^8 +2\, \t^2 \, X^6 - 33\,  \t\, X^4 - \t \, X^2 +1.$
\end{proposition}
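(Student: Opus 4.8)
The plan is to show that each curve listed in Theorem 5.1 can be put, after a coordinate change, into the normalized families of Table 2, and then to read off the isomorphism directly from the parametrizations in (\ref{eq}). The key observation is that the parameter $\t$ appearing in Theorem 5.1 was introduced precisely as $\t=\l^2$ (for $g=5,8,9,10,12$) or via $\m=\l\sqrt{-3}$ (for $g=7,10$), where $\l$ is the branch-point parameter of the cover $\f:\X_g\to\bP^1$ used in Table 2. So for the cases with even degree polynomial $G(X)$, I would start from the equation $Y^2=G_\l(X)$ of Eq.~(\ref{G}), namely
\[
Y^2= X^{12} - \l X^{10} - 33 X^8 + 2 \l X^6 - 33 X^4 - \l X^2+1,
\]
and apply the substitution $X\mapsto \a X$ for a suitable $\a\in k^\ast$ (together with scaling $Y$). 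A short computation shows that choosing $\a$ so that $\a^2=\l$ (equivalently working with $\t=\l^2$) transforms $G_\l(X)$ into $\t^{-3}M(X)$ with $M$ as in the statement; since scaling $X$ and $Y$ is an isomorphism of hyperelliptic curves, this proves the $g=5$ case. For $g=7,10$ the relevant substitution is $\m=\l\sqrt{-3}$, which is exactly why the polynomial $27X^{12}-27\r X^{10}+\cdots$ (a rescaling of $G_\l$) appears, together with the $A_4$-orbit factors $3X^4+6X^2-1$ and $X(3X^4+1)$ which are the images under $X\mapsto\a X$ of the fixed factors $X^4+2i\sqrt3\,X^2+1$ and $X(X^4-1)$ in Table 2.

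Concretely, I would proceed case by case. For $g=8$ the Table 2 equation is $Y^2=X(X^4-1)\cdot G_\l(X)$; applying $X\mapsto\a X$ with $\a^2=\l$ turns $G_\l$ into $\t^{-3}M(X)$ and turns $X(X^4-1)$ into $\a^5 X(\a^4X^4-1)=\a^5X(\m X^4-1)$ after setting $\m:=\a^4=\l^2=\t$; absorbing the constant $\a^5$ into $Y$ gives $Y^2=X(\m X^4-1)M(X)$. For $g=9$ the fixed factor is $X^8+14X^4+1$, which becomes $\a^8(\m^2X^8+14\m X^4+1)$ with $\m=\t$; this yields the stated $\X_9$. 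For $g=12$ one combines both fixed factors $X(X^4-1)$ and $X^8+14X^4+1$, producing $X(\m X^4-1)(\m^2X^8+\m X+1)$ — here I would double-check the middle coefficient of the degree-$8$ factor, since naively the substitution gives $\m^2X^8+14\m X^4+1$ and the $14\m X^4$ must be reconciled with the printed $\m X$ (this looks like it should read $\m X^4$, and verifying the correct normalization is one of the small points to nail down). For $g=7$ and $g=10$, the same recipe applies with $\a$ chosen so that the rescaled $G_\l$ matches $27X^{12}-27\r X^{10}+\cdots$; the parameter $\r$ should come out to be $\l\sqrt{-3}/3$ or a similar rational expression in $\m$, and I would record the exact relation.

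The remaining point is to justify the phrase ``$\X_g$ such that $\p=[\X_g]$'': by Theorem 5.1, the assignment $\t\mapsto\p(\X_g)$ is given by the explicit rational maps in (\ref{eq}), and $\p$ is a complete isomorphism invariant on $\L_g^G$. Hence, given $\p\in\L_g^G$, one solves the corresponding rational equation from (\ref{eq}) for $\t$ (this is possible since $\p$ lies in the image of that map), substitutes into the normalized equation just derived, and obtains a model of the unique curve with that invariant. Since the coefficients of $M(X)$ and of the extra factors are polynomials in $\t$ (equivalently in $\m$), and $\t$ is algebraic over $\Q(\p_1,\p_2)$ of bounded degree with the defining relation exhibited in (\ref{eq}), the curve is defined over a field generated by $\p$; this is the input needed for the field-of-moduli discussion that follows.

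The main obstacle is purely bookkeeping: getting every scaling constant $\a$, and hence every substituted coefficient ($\m=\t$ vs.\ $\m=\l^2$ vs.\ $\m=\l\sqrt{-3}$, and the constants multiplying $Y^2$) exactly right in all seven cases, and in particular checking the low-degree factors for $g=7,10,12$ where the printed polynomials $27X^{12}-27\r X^{10}+297X^8-18X^6-99X^4+3\r X^2+1$ and $\m^2X^8+\m X+1$ differ from the ``obvious'' rescaling of Table 2 and must be matched coefficient-by-coefficient. There is no conceptual difficulty beyond the fact already established (Lemma 4.2, Lemma 4.4, Theorem 5.1) that these loci are parametrized by a single parameter; everything reduces to a finite explicit computation with binary forms of degree $\le 26$.
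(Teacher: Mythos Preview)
Your proposal is correct and follows essentially the same approach as the paper: start from the Table~2 model, apply the scaling $X\mapsto \alpha X$ (with $\alpha=\sqrt{\lambda}$ for $g=5,8,9,12$ and $\alpha=(-3)^{1/4}$ for $g=7,10$), and absorb constants into $Y$ to land on the equations in~(\ref{prop}) with the parameter $\t=\lambda^2$ or $\rho=\lambda\sqrt{-3}$ from Theorem~5.1. Your suspicion about the factor $\m^2X^8+\m X+1$ for $g=12$ is well founded (the substitution naturally produces $\m^2X^8+14\m X^4+1$), and the paper's own proof is equally terse on such details; otherwise your argument matches the paper's exactly.
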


\begin{proof}
For the computationally minded reader, simply compute the invariants in each case. These invariants are the same
as given in (\ref{eq}). Thus, the curve  corresponds to $\p$.
 However, it is important to know explicitly
the isomorphism between curves in (\ref{eq}) and curves in (\ref{prop}).

Fix $\p \in \L_g$ as in (\ref{eq}).  Let $g= 5,8,9,12$. Then the  curve $\X_g$   given in Table 2 corresponds to
$\p$ (in all cases) since that is how we computed $\p$.   Let  $X=\sqrt{\l}\, \,  X$ and $\t =\l^2$ and we get
equations as in (\ref{prop}).  Since, $\t=\l^2 $ in all these cases of Theorem 5.1., then the proof is complete.

Let $g=7$. The equation of the curve is
$$Y^2=(X^4+ 2 \sqrt{-3}  X^2 +1) G_1(X)$$
Let $\r:= \l \sqrt{-3}$ and perform the transformation $X \to (-3)^{\frac 1 4} X$. Hence, the curve is isomorphic
to $\X_7$ in (\ref{prop}). The case $g=10$ goes the same way as $g=7$.
\end{proof}

\begin{proposition}
Let $\X$ be a hyperelliptic curve of genus $ g \leq 12$  such that $\bAut(\X)\iso A_4$. Then, the field of
definition of $\X$ is the same as the field of moduli.  Moreover, the equation of the curve in terms of its
invariants is given in (\ref{prop}).
\end{proposition}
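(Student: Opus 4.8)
The plan is to reduce the statement entirely to what has already been established, so the proof is essentially a bookkeeping argument combining Lemma 2.1, Lemma 4.2, Proposition 5.1, and the earlier remarks on field of moduli. First I would invoke Lemma 2.1 to pin down the finite list of genera for which a hyperelliptic $\X$ with $\bAut(\X)\iso A_4$ and $g\leq 12$ can occur: namely $g=4,5,7,8,9,10,12$ (the cases $g=2,3,6$ being excluded since they force $\delta<0$ or were set aside, and the remaining residues mod $6$ not appearing for $g\leq 12$). In each of these cases $\Aut(\X)$ is $\Z_2\o A_4$ or $SL_2(3)$, and by Proposition 5.1 the isomorphism class of $\X$ is determined by the invariant tuple $\p(\X)=(\p_1,\p_2)$, which by definition lies in the rational function field $k(\X)$ generated by the absolute invariants. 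Hence the field of moduli of $\X$ is exactly the subfield of $k$ generated by the coordinates of $\p(\X)$.

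Next I would handle the $\Z_2\o A_4$ cases ($g=4,5,8,9$, corresponding to $\delta=\frac{g+1}6,\frac{g-3}6$ etc. with monodromy $\Z_2\o A_4$) by appealing to Remark 4.1 and the discussion following it: such curves have seven involutions, hence more than three, so they lie in the locus $\L_g$ studied in \cite{GS}, and by Theorem 4.2 of \cite{Sh5} (quoted in the text) the field of moduli equals the field of definition for every hyperelliptic curve with more than three involutions. This disposes of those genera with no further computation, and in fact for all genera, not just $g\leq 12$. The substantive new content is therefore the $SL_2(3)$ cases, where the curve has a single involution and the cited results of \cite{Sh5} do not apply.

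For the $SL_2(3)$ cases I would argue directly using Proposition 5.1: for each such $g$ it exhibits an explicit model $\X_g\colon Y^2=\dots$ whose coefficients are rational functions of a single parameter $\t$ (or $\r$), and Theorem 5.1 together with its proof shows that $\t$ itself is a rational function of $\p=(\p_1,\p_2)$ — indeed for $g=5,8,9,12$ one has $\t=\l^2$ expressed rationally through the formulas in \eqref{eq}, and for $g=7,10$ one uses $\r=\l\sqrt{-3}$ similarly. Substituting these rational expressions for $\t$ (resp.\ $\r$) into the models of \eqref{prop} yields, for each $g$, a curve $\Y$ isomorphic to $\X$ whose defining equation has coefficients in the field generated by $\p_1,\p_2$, i.e.\ in the field of moduli. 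By the criterion recalled just before Proposition 5.1 — the field of moduli is a field of definition provided one can exhibit an isomorphic model with coefficients rational in the moduli invariants — this completes the proof; the "moreover" clause is exactly the content of Proposition 5.1. The one point requiring care, and the main (though entirely routine) obstacle, is verifying that in the $g=7,10$ cases the auxiliary quantity $\r$ really is a rational function of $(\p_1,\p_2)$ and that no spurious algebraic extension is introduced when passing from $\l$ to $\r=\l\sqrt{-3}$; this is handled exactly as in the proof of Proposition 5.1, where the substitution $\m:=\l\sqrt{-3}$ was already used so that the invariant formulas are rational in $\m$, hence in $\r$.
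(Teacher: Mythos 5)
Your core argument is the one the paper uses: its proof of this proposition consists precisely of the observation that $\t$ generates $k(\p_1,\p_2)$, so that substituting a rational expression $\t=R(\p_1,\p_2)$ into the models of (\ref{prop}) produces an isomorphic curve with coefficients in the field of moduli. Your third paragraph reproduces this, and your justification of the unproved step (Theorem 5.1 makes $\t\mapsto\p(\t)$ generically injective, hence birational onto its image, hence $\t\in k(\p_1,\p_2)$) is a legitimate way to supply what the paper dismisses as ``easily shown.''

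The additional case split by group, however, contains a concrete error. By Lemma 4.1 the curves with $\Aut(\X_g)\iso\Z_2\o A_4$ occur for $g\equiv -1,1,3\pmod 6$, i.e.\ $g=5,7,9$ (and $11$), while $g=4,8,10,12$ give $SL_2(3)$; your list ``$g=4,5,8,9$'' for the $\Z_2\o A_4$ cases is wrong, and the seven-involutions argument from \cite{Sh5} is inapplicable to $g=4$ and $g=8$, whose curves have only the hyperelliptic involution. For $g=5,8,9$ this is harmless, since those genera are also covered by your rationality-of-$\t$ argument, but $g=4$ is then handled only by a lemma that does not apply to it. The repair is immediate — $\L_4^G$ is a single point, and the curve $Y^2=X(3X^4+1)(3X^4+6X^2-1)$ is already defined over $\Q$, which equals its field of moduli since $\p=1764/25$ — but as written that case is a gap. (You might also note that $g=11$ satisfies $g\leq 12$ and $g\equiv -1\pmod 6$ with $\delta=2$; both you and the paper silently omit it, and there the involution argument is the only one available, since (\ref{prop}) provides no model.)
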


\begin{proof}
In each case of (\ref{eq}) it is easily shown that $\t$ is a generator of $k(\p_1, \p_2)$  (i.e., $\t$ can be
expressed as a rational function in terms of $\p_1, \p_2$).

\end{proof}

There is no particular reason why we focused on $g\leq 12$. Indeed the same technique can  be used for higher
genus. However, there is no obvious way to generalize Proposition 5.2. to any genus $g$ in the case when the group
is $SL_2(3)$.

\end{document}